\title[Birational Geometry of O'Grady's Six Dimensional Example]{Birational Geometry of O'Grady's Six Dimensional Example 
over the Donaldson-Uhlenbeck Compactification}
\author{Yasunari Nagai}
\address{
Graduate School of Mathematical Sciences, the University of Tokyo, 
3-8-1 Komaba Meguro Tokyo 153-8914, Japan}
\email{nagai@ms.u-tokyo.ac.jp}
\date{July 7, 2010}
\theoremstyle{plain}
\newtheorem{theorem}[subsection]{Theorem}
\newtheorem{lemma}[subsection]{Lemma}
\newtheorem{proposition}[subsection]{Proposition}
\newtheorem{corollary}[subsection]{Corollary}
\newtheorem*{theorem*}{Theorem}
\newtheorem*{corollary*}{Corollary}
\newtheorem*{MainTheorem}{Main Theorem}
\theoremstyle{definition}
\newtheorem{definition}[subsection]{Definition}
\theoremstyle{remark}
\newtheorem*{claim}{Claim}
\newtheorem{remark}[subsubsection]{Remark}
\DeclareSymbolFont{cmletters}{OML}{cmm}{m}{it}
\DeclareSymbolFont{cmsymbols}{OMS}{cmsy}{m}{n}
\DeclareSymbolFont{cmlargesymbols}{OMX}{cmex}{m}{n}
\DeclareMathSymbol{\myjmath}{\mathord}{cmletters}{"7C}
\DeclareMathSymbol{\myamalg}{\mathbin}{cmsymbols}{"71}
\DeclareMathSymbol{\mycoprod}{\mathop}{cmlargesymbols}{"60}
\let\jmath\myjmath
\let\amalg\myamalg
\def\lto{\longrightarrow}
\DeclareMathOperator{\Supp}{Supp}
\DeclareMathOperator{\Pic}{Pic}
\DeclareMathOperator{\Sym}{Sym}
\DeclareMathOperator{\Kum}{Kum}
\DeclareMathOperator{\Spec}{Spec}
\DeclareMathOperator{\Proj}{Proj}
\DeclareMathOperator{\tr}{tr}
\DeclareMathOperator{\pr}{pr}
\DeclareMathOperator{\id}{id}
\DeclareMathOperator{\Aut}{Aut}
\DeclareMathOperator{\Hom}{Hom}
\DeclareMathOperator{\Ext}{Ext}
\DeclareMathOperator{\gr}{gr}
\DeclareMathOperator{\Ker}{Ker}
\DeclareMathOperator{\rank}{rank}
\DeclareMathOperator{\length}{length}
\DeclareMathOperator{\Quot}{Quot}
\DeclareMathOperator{\Def}{Def}
\DeclareMathOperator{\Alb}{Alb}
\DeclareMathOperator{\alb}{alb}
\def\git{/\!\!/}
\renewcommand{\qedsymbol}{Q.E.D.}
\def\noqed{\renewcommand{\qedsymbol}{}}
\begin{document}

\baselineskip 16.6pt
\parskip 6pt

\maketitle
\vspace{-1cm}

\begin{abstract}
We determine the birational geometry of O'Grady's six dimensional example over the Donaldson-Uhlenbeck 
compactification, by looking at the locus of non-locally-free sheaves on the relevant moduli space. 
\end{abstract}

\section*{Introduction}

Let $A$ be an abelian surface whose N\'eron-Severi group is generated by an ample divisor $H$. 
Let $M$ be the moduli space of Gieseker-semistable sheaves on $A$ of rank 2, $c_1=0$, and
$c_2=2$, and $X$ the fiber of the Albanese morphism $M\to \Alb(M)=A\times \hat A$ over the origin. 
O'Grady \cite{OG6} proved that $X$ admits a symplectic resolution 
\[
\pi :\widetilde X\to X
\]
and $\widetilde X$ is an irreducible symplectic K\"ahler manifold of dimension $6$ with 
the second Betti number $8$. This construction gave the fourth new example of 
higher dimensional irreducible symplectic K\"ahler manifold, which we call O'Grady's 
six dimensional example. 

We have another projective birational morphism relevant to $X$, 
namely the morphism to the Donaldson-Uhlenbeck 
compactification
\[
\varphi : M\to M^{DU}
\]
obtained by discarding some algebraic data of non-locally-free sheaves on $M$. 
The exceptional set of $\varphi$ is the locus of non-locally-free sheaves $B_M$. 
Let $\varphi _X$ be the restriction of $\varphi$ to $X$, $B=B_M\cap X$, and $X^{DU}$ the image of $\varphi _X$. 
An analysis of the locus $B$ (or its strict transform $\widetilde B$ on $\widetilde X$) 
was one of the crucial points in \cite{OG6} in proving that 
the second Betti number of $\widetilde X$ is 8, which asserts that $\widetilde X$ is 
not deformation equivalent to the other previously known examples 
of irreducible symplectic K\"ahler manifold. 
The non-locally-free locus $B$  played central role in the works of 
Rapagnetta \cite{R} and Perego \cite{P}, which are about the topology and the singularity 
of O'Grady's six dimensional example, respectively.

On the other hand, not much has been known for the algebro-geometric structure of the example. 
A significant result is due to Lehn--Sorger \cite{L-S}: they showed that O'Grady's resolution $\pi$ is 
nothing but the blowing-up along the singular locus $X_{sing}$. They gave even the local model of 
the singularity of $X$ in terms of nilpotent orbit closure. Thus, we have a complete understanding for 
the resolution $\pi$. It is also noteworthy that Rapagnetta \cite{R} studied a Jabobian-Lagrangian fibration 
on a birational model of $X$.

In this article, we give a complete understanding of the divisor $\widetilde B$, namely, 
we determine explicitly the birational geometry of $\widetilde X$ relative to $X^{DU}$. 

\begin{MainTheorem} 
Under the notation as above,  
\begin{enumerate}[(i)]
\item There exists a projective birational contraction $f : \widetilde X\to X'$ that 
contracts the divisor $\widetilde B$, the strict transform of $B$ on $\widetilde X$, 
and makes the diagram 
\[
\xymatrix @C=10pt  @R=17pt {
& \widetilde X \ar[ld]_{\pi} \ar[rd]^{f}\\
X \ar[rd]_{\varphi _X} & & X'\ar[ld] \\
& X^{DU} }
\]
commutative. 
\item The restriction of $f$ to $\widetilde B$ is a $\mathbb P^1$-bundle with the base $f (\widetilde B)$ 
isomorphic to the product of Kummer surfaces $\Kum (\hat A)\times \Kum (A)$. The singular locus of 
$X'$ coincides with $f (\widetilde B)$ and is a locally trivial family of $A_1$-surface singularities. 
\end{enumerate}
\end{MainTheorem}

This theorem asserts that the birational geometry of $\widetilde X$ is as simple as 
it can be expected. As the value of Beauville-Bogomolov form $q_{\widetilde X}(\widetilde B)=-4$ is negative
(\cite{R}, Theorem 3.5.1), one can easily expect that $\widetilde B$ should be contracted after 
finite sequence of flops. The Main Theorem asserts that actually we need no flop 
to contract the divisor $\widetilde B$. 

The article is organized as the following: we begin with a review of 
the moduli space and the morphism $\varphi$ to the Donaldson-Uhlenbeck compactification.  
Then, we give the statement of the classification of the fibers of $\varphi$ 
(Theorem \ref{strB}) in the first section. 
It is well-known that the Fourier-Mukai functor associated with the Poincar\'e line bundle is 
extremely useful in studying the moduli spaces of sheaves on an abelian surface, 
and it is also the case in our problem. We prove in \S 2 that the Fourier-Mukai functor 
gives a striking explanation to the ``duality phenomenon'' that we will see throughout 
the article (Theorem \ref{FM}). This theorem may be of independent interest. 
The Fourier-Mukai functor will also be used at many technical points in the later sections. 
We establish a GIT theoretic description of the fiber of $\varphi$ in \S 3, which reduces 
the proof of Theorem \ref{strB} to calculation of certain homogeneous invariant rings.  
In \S4, we complete the proof of Theorem \ref{strB} by actually executing the calculation. 
The line of the argument in \S\S 3 and 4 is completely parallel to that of \cite{N} and relying 
on a computer algebra system at some points. 
To obtain from Theorem \ref{strB} the information on $\widetilde B$ that we need to prove our Main Theorem,  
we have to analyze the scheme structure of the intersection $B\cap \Sigma$, which 
will be done in \S 5 using deformation theory. This part is comparatively technical, but plays important role in 
our argument. Here, we again use some computer calculation. 
In \S 6, we prove Main Theorem gathering up the results in the previous 
sections. 

One may ask if one can play the same game also for O'Grady's ten dimensional example \cite{OG10}. 
Theoretically, it is certainly possible; every machinery we use in this article can be 
applied to the case of ten dimensional example (cf. \cite{N}). One main bottleneck 
is that $B\cap \Sigma$ is in fact much more complicated than the current case. 
For that reason, the author has not yet succeeded to complete the program 
for the ten dimensional example up to now. 

\paragraph{\bf Acknowledgment}
The author would like to thank Manfred Lehn for his suggestions. 
Several crucial ideas came out of the discussions with him. 
He would also like to thank Arvid Perego for stimulating discussions. 
The author is supported by Grant-in-Aid for Young Scientists (B) 22740004, 
the Ministry of Education, Culture, Sports, Science and Technology, Japan. 
He is a member of Global COE program of Graduate School of Mathematics, 
the University of Tokyo. 

\section{Non-locally-free locus of O'Grady's six dimensional example}

Let $A$ be an abelian surface with $\dim _{\mathbb R} NS (A)_{\mathbb R}=1$, 
$H$ an ample divisor on it, and $\hat A=\Pic ^0(A)$ the dual abelian surface.  
We consider the moduli space $M$ of Gieseker $H$-semistable sheaves on $A$ with rank $2$, 
$c_1=0$, $c_2=2$. The Albanese morphism of $M$ is given by
\[
\alb _M: M\to A\times\hat A,\quad [E]\mapsto (\sum \mathbf c_2(E), \det E), 
\]
where $\mathbf c_2$ is the chern class map taking value in the Chow ring and 
$\sum$ denotes the summation map $CH^2(A)\to A$. 
The Albanese morphism $\alb _M$ turns out to be a surjective isotrivial family. 
We define
\[
X=\alb _M^{-1}(0,0).
\]
The variety $X$ is of dimension 6, since $\dim M=10$. O'Grady \cite{OG10,OG6}, 
proved that $X$ is singular but admits a symplectic resolution 
$\pi:\widetilde X\to X$, and $\widetilde X$ is irreducible symplectic manifold with 
the second Betti number $b_2(\widetilde X)=8$.  
Later, Lehn--Sorger \cite{L-S} proved that the resolution $\pi$ 
is nothing but the blowing-up along $(X_{sing})_{red}$. 

Let $\Sigma _M$ be the locus of strictly semistable sheaves on $M$. 
By \cite{OG6}, Lemma 2.1.2, every strictly semistable sheaf $[E]\in \Sigma _M$ 
is $S$-equivalent to $\mathfrak m_{p_1}L_1\oplus \mathfrak m_{p_2}L_2$, 
where $p_1, p_2\in A$ and $L_1, L_2\in \Pic ^0(A)$. 
Denote by $\Sigma = \Sigma _M\cap X$ the restriction 
of $\Sigma _M$ to $X$. Then, $[E]\in \Sigma$ if and only if 
$[E]=[\mathfrak m_{p}L \oplus \mathfrak m_{-p}L^{-1}]$. Therefore, we have a stratification  
$\Sigma =\Sigma ^0 \amalg \Sigma ^1$, where
\[
\begin{aligned}
\Sigma ^0&=\{[\mathfrak m_pL\oplus \mathfrak m_{-p}L^{-1}]\in X \mid p\not\in A[2] \mbox{ or } L\not\in 
(\Pic ^0(A))[2]\}, \\
\Sigma ^1&=\{[(\mathfrak m_pL)^{\oplus 2}]\in X \mid p\in A[2] \mbox{ and }L\in (\Pic ^0(A))[2]\}.
\end{aligned}
\]

Let $B_M$ be the locus of non-locally-free sheaves on $M$, namely, we define 
\[
B_M=\{[E]\in M\mid E\mbox{ is not locally free}\}, 
\]
and put $B=B_M\cap X$. Obviously, $\Sigma _M\subset B_M$ and $\Sigma \subset B$. 
$B_M$ can be captured as the exceptional locus of 
the morphism to the Donaldson-Uhlenbeck compactification
\[
\varphi: M\to M^{DU},
\]
(see \cite{H-L}, Chap. 8). We denote by $\varphi _X$ the composition
$X\hookrightarrow M\overset{\varphi}{\to}M^{DU}$. 

\begin{proposition}\label{doubledual}
Let $[E]\in B_M$ and consider its double dual $E^{**}$. Then $E^{**}$ is 
locally free and $\mu$-semistable with $c_1(E^{**})=c_2(E^{**})=0$
and $E^{**}$ is a (possibly trivial) extension of line bundles
\[
0\lto L_1\lto E^{**}\lto L_2\lto 0, 
\]
where $L_1,L_2\in \Pic ^0(A)$. If $[E]\in B$, we have $L_2\cong L_1^{-1}$. 
\end{proposition}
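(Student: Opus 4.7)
The argument combines standard reflexivity and Bogomolov on a smooth surface with an analysis on the Gieseker polystable representative of $[E]$, plus one Fourier--Mukai non-existence input. Since $E$ is torsion-free on the smooth surface $A$, $E^{**}$ is reflexive, hence locally free, and the cokernel $Q$ of $E\hookrightarrow E^{**}$ is zero-dimensional; this immediately yields $c_1(E^{**})=0$. For $\mu$-semistability of $E^{**}$: any saturated rank-one subsheaf $F\subset E^{**}$ has $F\cap E\subset E$ of the same rank and first Chern class as $F$ (as $Q$ is $0$-dimensional), so $\mu(F)=\mu(F\cap E)\leq 0$.

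For $c_2(E^{**})=0$, I replace $E$ by its Gieseker polystable representative. The reduced Hilbert polynomial $-1+t^{2}H^{2}/2$ forces either (a) $E$ itself is Gieseker-stable, or (b) $E=I_{p_1}L_1\oplus I_{p_2}L_2$ with $L_i\in\Pic^0(A)$ and $p_i\in A$; case (b) gives $E^{**}=L_1\oplus L_2$ and the extension structure immediately. In case (a), Bogomolov together with $E\subsetneq E^{**}$ confines $c_2(E^{**})$ to $\{0,1\}$, and I exclude the value $1$ by a case split on $E^{**}$. If $E^{**}$ were $\mu$-strictly-semistable, its (unique) $\Pic^0$ sub-line-bundle $L$ would cut out in $E$ the subsheaf $I_qL$ (with $q$ the non-locally-free point of $E$), whose reduced Hilbert polynomial equals $P_E/2$, contradicting Gieseker-stability. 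If instead $E^{**}$ were $\mu$-stable, the vanishings $H^i(E^{**}\otimes\mathcal L)=0$ for $i\in\{0,2\}$ and all $\mathcal L\in\Pic^0(A)$ show that $E^{**}$ is WIT$_1$ for the Poincar\'e bundle, with $\widehat{E^{**}}=G[-1]$ where $G$ is a rank-one sheaf on $\hat A$ of Mukai vector $(1,0,-2)$, hence of the form $I_ZL'$ with $\length Z=2$; but at the point $a_0\in A$ Poincar\'e-dual to $L'$ one has $G\otimes\mathcal P_{a_0}^{-1}=I_Z$, for which $H^2=H^2(\mathcal O_{\hat A})\neq 0$, so the inverse Fourier--Mukai transform is a two-term complex rather than a sheaf, ruling out such $E^{**}$. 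Thus $c_2(E^{**})=0$, and the $\mu$-Jordan--H\"older filtration exhibits $E^{**}$ as an extension $0\to L_1\to E^{**}\to L_2\to 0$ with $L_1,L_2\in\Pic^0(A)$ (possibly split).

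Finally, when $[E]\in B$ the Albanese condition forces $\det E\cong\mathcal O_A$, and since $\det Q\cong\mathcal O_A$ for any zero-dimensional sheaf (by its Koszul resolution), we conclude $L_1\otimes L_2=\det E^{**}=\det E=\mathcal O_A$, i.e., $L_2\cong L_1^{-1}$. The main technical obstacle in this plan is the exclusion of the $\mu$-stable subcase of (a), which ultimately rests on the Fourier--Mukai non-existence for Mukai vector $(2,0,-1)$ on $A$ sketched above.
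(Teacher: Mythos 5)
Your plan is genuinely different from the paper's proof (the paper disposes of the stable case by quoting O'Grady's Lemma 4.3.3 and treats the strictly semistable case through the polystable representative, exactly as you do), and most of it is sound; but there is one real gap at the decisive step. Having shown $c_2(E^{**})\in\{0,1\}$ and excluded the value $1$, you conclude that ``the $\mu$-Jordan--H\"older filtration exhibits $E^{**}$ as an extension $0\to L_1\to E^{**}\to L_2\to 0$ with $L_i\in\Pic^0(A)$.'' This presupposes that $E^{**}$ is \emph{not} $\mu$-stable, and nothing in your argument excludes a $\mu$-stable locally free $E^{**}$ with numerically trivial determinant and $c_2(E^{**})=0$: Bogomolov only gives $c_2\geqslant 0$, and your case split treats only $c_2=1$. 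Since the extension structure is the whole content of the proposition (it is what is cited by O'Grady's lemma and what feeds the description of $\varphi(B_M)$ later), this omission must be repaired. Fortunately the tool is already in your hands: if $E^{**}$ were $\mu$-stable with $\mathrm{ch}_2(E^{**})=0$, then $\mu$-stability kills $H^0(E^{**}\otimes\mathcal P_y)$ and $H^2(E^{**}\otimes\mathcal P_y)$ for every $y\in\hat A$, and $\chi(E^{**}\otimes\mathcal P_y)=\mathrm{ch}_2(E^{**})=0$ then forces $H^1=0$ as well, so $\Phi(E^{**})=0$, contradicting that the Fourier--Mukai functor is an equivalence (alternatively: $\mu$-semistable sheaves with vanishing discriminant are projectively flat, and the abelian group $\pi_1(A)$ has no irreducible rank-two representations). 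Without some such sentence the conclusion does not follow.

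Two smaller points. In your exclusion of the $\mu$-stable, $c_2(E^{**})=1$ subcase, the assertion that $G=H^1(\Phi(E^{**}))$ ``is of the form $I_ZL'$ with $\length Z=2$'' tacitly assumes $G$ is torsion-free, which you do not justify; in fact the fiberwise vanishing you established together with the constant value $h^1(E^{**}\otimes\mathcal P_y)=-\chi=1$ gives, by cohomology and base change, that $G$ is a \emph{line bundle} with numerically trivial $c_1$, and then $\mathrm{ch}_2(G)=-2$ is already absurd — so the detour through the inverse transform at the point $a_0$ is unnecessary (your version does work once torsion-freeness is supplied). Likewise, in the $c_2(E^{**})=0$ case you should say why the degree-zero subsheaf and its quotient are honest line bundles in $\Pic^0(A)$: saturate the subsheaf, note that a saturated rank-one subsheaf of a locally free sheaf on a smooth surface is reflexive, hence invertible, and use the $\mathrm{ch}_2$ count to see the quotient is invertible too. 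Finally, your reduction to the Gieseker polystable representative is not merely cosmetic but necessary, and it matches the paper's own reading: the double dual is not an invariant of the S-equivalence class, and for a suitable non-polystable strictly semistable representative (an elementary modification at one point of a locally free extension of $\mathfrak m_qL'$ by $L$) one gets $c_2(E^{**})=1$, so the statement is to be proved, as you do, for the polystable sheaf.
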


\begin{proof}
This is exactly \cite{OG6}, Lemma 4.3.3, if $E$ stable. 
It is easier to see the case in which $[E]$ is strictly semistable; if $[E]\in B$ is 
strictly semistable, then $E$ is S-equivalent to $\mathfrak m_{p}L_1\oplus \mathfrak m_{q}L_2$, 
so that $[E^{**}]=[L_1 \oplus L_2]$. As $\det E=L_1\otimes L_2$, if $[E]\in B$, namely, if 
$\det E=\mathcal O_A$, we must have $L_2\cong L_1^{-1}$. 
\end{proof}

\subsection{}\label{descrvarphi}
This proposition implies that we have the following 
short exact sequence for each $E\in B_M$; 
\[
0\lto E\lto E^{**}\lto Q(E)\lto 0,
\]
where $Q(E)$ is of length $c_2(E^{**})=2$. We associate to $Q(E)$ 
a 0-cycle $c(Q(E))\in \Sym ^2(A)$ by
\[
c (Q(E))=\sum _{p\in A} \length(Q(E)_p)\cdot p .
\]
The morphism 
$\varphi$ is given by the correspondence (\cite{H-L}, Chap. 8)
\[
E\mapsto \gamma (E):=(\gr (E^{**}), c (Q(E))). 
\]
Therefore, we know that 
\[
\varphi (B_M)\cong \Sym ^2(\hat A)\times \Sym ^2(A).
\]
If $[E]\in B$, then $\gr (E^{**})$ is of the form $L\oplus L^{-1}$ and 
$\gamma (Q(E))=p+(-p)$, so we know that 
\[
\varphi _X (B)\cong (\hat A/\{\pm 1\}) \times (A/\{\pm 1\})
\]
as  $(\hat A/\{\pm 1\}) \times (A/\{\pm 1\})$ can be identified in $\Sym ^2(\hat A)\times \Sym ^2(A)$ 
with the image of the product of anti-diagonals in $\hat A^2$ and $A^2$. 
In the following, we determine every fiber of the restriction
\[
\varphi _{X|B}:B\to \varphi (B).
\]

\begin{theorem}\label{strB}
Let $\gamma =([L], [p])\in (\hat A/\{\pm 1\}) \times (A/\{\pm 1\})$ and 
$B_{\gamma}$ the fiber $\varphi_{X|B}^{-1}(\gamma)$ with the reduced 
structure. 
\begin{enumerate}[(i)]
\item If neither $L$ nor $p$ is 2-torsion, $B_{\gamma}\cong \mathbb P^1$. 
The intersection $B_{\gamma}\cap \Sigma$ consists of two points
$[\mathfrak m_pL\oplus \mathfrak m_{-p}L^{-1}]$ and $[\mathfrak m_{-p}L\oplus \mathfrak m_pL^{-1}]$.
\item If exactly one of $L$ and $p$ is 2-torsion, $B_{\gamma}\cong \mathbb P^2$. 
The intersection $B_{\gamma}\cap \Sigma$ consists of one point, which is
$[\mathfrak m_pL\oplus \mathfrak m_{-p}L]$ if $L^{\otimes 2}\cong \mathcal O_A$, and 
$[\mathfrak m_pL\oplus \mathfrak m_pL^{-1}]$ if $p$ is a 2-torsion point on $A$. 
\item If both of $L$ and $p$ are 2-torsion, $B_{\gamma}$ is a cone over a smooth quadric 
surface in $\mathbb P^4$. The intersection $B_{\gamma}\cap \Sigma$ is the vertex of the cone, 
which corresponds to $[(\mathfrak m_pL)^{\oplus 2}]$. 
\end{enumerate}
\end{theorem}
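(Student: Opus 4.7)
The plan is to realize $B_\gamma$ as a quotient of a scheme of explicit pairs. By Proposition~\ref{doubledual} and the description in \S\ref{descrvarphi}, every $[E]\in B_\gamma$ is recovered from the data $(F,q)$ where $F=E^{**}$ is a rank-$2$ locally free sheaf fitting in an extension $0\to L\to F\to L^{-1}\to 0$ and $q\colon F\twoheadrightarrow Q$ is a length-$2$ quotient with $0$-cycle $p+(-p)$; two such data yield isomorphic $E$ precisely when related by an element of $\Aut(F)$. Thus $B_\gamma$ is identified with the (GIT) quotient of the parameter space of pairs $(F,q)$ by the natural group of isomorphisms.

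The first step is to pin down the possible $F$'s. Since $\Ext^1(L^{-1},L)=H^1(A,L^{\otimes 2})$, and cohomology of a nontrivial degree-zero line bundle on an abelian surface vanishes, $F\cong L\oplus L^{-1}$ is uniquely determined when $L$ is not $2$-torsion, in which case $\Aut(F)=\mathbb G_m^2$ with effective $\mathbb G_m$-action after dividing by the central scalar. When $L$ is $2$-torsion, $\Ext^1(L,L)\cong\mathbb C^2$ parameterizes a family of self-extensions and the automorphism group picks up additional unipotent directions. The second step is to describe the Quot scheme of length-$2$ quotients of $F$ supported on $\{p,-p\}$: when $p\ne-p$ it factors as $\Quot^1(F,p)\times\Quot^1(F,-p)\cong\mathbb P^1\times\mathbb P^1$, and when $p=-p$ it is a punctual Quot scheme at $p$, whose explicit description I would work out in local coordinates using the standard stratification by the isomorphism type of $Q$.

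The theorem is then proved case by case by a GIT calculation. In case~(i), $\mathbb G_m$ acts on $\mathbb P^1\times\mathbb P^1$ with opposite weights on the two factors and the quotient is $\mathbb P^1$; the two $\mathbb G_m$-fixed points correspond bijectively to the two $S$-equivalence classes $[\mathfrak m_pL\oplus\mathfrak m_{-p}L^{-1}]$ and $[\mathfrak m_{-p}L\oplus\mathfrak m_pL^{-1}]$. In case~(ii), the Fourier--Mukai duality of Theorem~\ref{FM} interchanges the two sub-cases, so it suffices to treat, say, $L$ generic and $p\in A[2]$; then $F\cong L\oplus L^{-1}$ is still unique and the punctual Quot scheme at $p$ must be quotiented by the residual $\mathbb G_m$, yielding $\mathbb P^2$ with a single closed orbit, the $\Sigma$-point. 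Case~(iii) has both the parameter space of $F$ and the punctual Quot scheme varying, with the full nonabelian automorphism group (containing $\mathrm{GL}_2$) acting; computing the ring of invariants should give the cone over a smooth quadric in $\mathbb P^4$, with the vertex corresponding to the unique $\Sigma$-point $[(\mathfrak m_pL)^{\oplus 2}]$.

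The main obstacle will be the invariant-ring computations in cases~(ii) and (iii): explicit generators of the invariants must be exhibited and all relations among them computed, which for the nonabelian group appearing in case~(iii) likely requires a computer algebra system (as the author anticipates for \S 4). The identification of $B_\gamma\cap \Sigma$ in each case is automatic from the GIT picture, since strictly polystable orbits are exactly the closed orbits on the semistable locus and correspond to the $S$-equivalence classes of polystable sheaves listed in the statement.
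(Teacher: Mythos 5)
Your overall strategy -- realize $B_{\gamma}$ as a GIT quotient of a parameter space of pairs (extension $F$ of $L^{-1}$ by $L$, length-two quotient $q\colon F\twoheadrightarrow Q$) modulo isomorphisms, compute invariants case by case, and read off $B_{\gamma}\cap\Sigma$ from the closed (polystable) orbits -- is the same as the paper's (Theorem \ref{GITdescr} and \S 4), and your cases (i) and (ii) would go through essentially as you describe (the reduction of the $L$-torsion half of (ii) to the $p$-torsion half via Theorem \ref{FM} is exactly what the paper does). But there is a genuine gap in how you propose to handle the extension side when $L$ is $2$-torsion, which you cannot avoid in case (iii). There the sheaf $F$ varies (split plus a $\mathbb P^1$ of non-split self-extensions), the automorphism groups $\Aut(F)$ are non-reductive and jump from orbit to orbit, and ``$\Ext^1(L,L)\times(\mbox{punctual Quot})$ modulo the natural isomorphisms'' is not yet a GIT problem: there is no single reductive group acting on a fixed affine or projective parameter space, and the $GL_2$ you invoke is only $\Aut(L^{\oplus 2})$, which does not act on the non-split locus. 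The paper's key device, which your plan is missing, is Lemma \ref{extension}: via the Fourier--Mukai functor $\Phi^D$, extensions $0\to L\to F\to L^{-1}\to 0$ correspond to length-two artinian $\mathcal O_{\hat A}$-modules, so the extension data is linearized as a point of $N(\mathcal V_{\gamma_{\hat A}})$ with the reductive group $GL(\Gamma(\mathcal V_{\gamma_{\hat A}}))$ acting -- exactly dual to the quotient side $N(\mathcal Q_{\gamma_A})$. This is what makes $Y_{\gamma}=N(V)\times\Hom(V,Q)\times N(Q)$ with $GL(V)\times GL(Q)$ a classical invariant-theory problem (symbolic method plus a Gr\"obner computation) whose answer is the cone over the quadric. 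Without this (or an equivalent rigidification of the family of $F$'s), your case (iii) computation cannot be set up.

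A secondary point: you assert that the identification of $B_{\gamma}$ with the quotient, and of $B_{\gamma}\cap\Sigma$ with the strictly polystable locus, is ``automatic from the GIT picture.'' It is not automatic: one must choose a linearization and prove that GIT (semi)stability of the pair data coincides with Gieseker (semi)stability of the kernel $E$. The paper does this via O'Grady's Lemma 2.1.2 together with a Hilbert--Mumford computation, and identifies the character $\chi_{\gamma}$ with the Le Potier determinant line bundle that is relatively ample for $M\to M^{DU}$ (Remark \ref{lepotier}); some such verification is needed in your setup as well, e.g.\ to justify that the symmetric linearization on $\mathbb P^1\times\mathbb P^1$ in case (i) is the right one and that unstable points are precisely those with $L^{\pm1}\subset E$.
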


\begin{remark}
For the time being, we regard $B_{\gamma}\cap \Sigma$ only as a set. 
Actually, the scheme structure of the intersection is non-reduced in the 
cases (ii) and (iii) (Theorem \ref{centerofblowingup}), 
which will be important in the proof of our Main Theorem. 
We will come back to this point in \S 4. 
\end{remark}

The proof of the theorem goes in the same way as in \cite{N}.
Namely, we describe $B_{\gamma}$ as a projective GIT quotient of
certain affine variety (\S 3) and get the set of projective equations for $B_{\gamma}$ 
by actually calculating the associated invariant ring (\S 4). 

\begin{remark}
O'Grady \cite{OG6} already studied the fibration $\varphi _{X|B} : B\to \varphi(B)$ 
in order to determine the fundamental group and the second Betti number 
of the holomorphic symplectic manifold $\widetilde X$. His calculations in \emph{op. cit.}, 
especially in \S 5, hint that the fiber of $\varphi _{|B}$ should be just as in Theorem \ref{strB}, 
and even gives a faint view toward Main Theorem, although he never claimed them explicitly. 
Our approach to the theorem will give an easy and conceptually clarified explanation of 
the phenomenon. 
\end{remark}

\section{Fourier-Mukai transforms}

Before moving on to the proof of Theorem \ref{strB}, 
we prepare an elementary result about 
Fourier-Mukai transforms associated with the Poincar\'e line bundle (Theorem \ref{FM}). 
Our reference for this section is \cite{Y}, \S2. See also \cite{Muk}. 

Let $\mathcal P$ be the Poincar\'e line bundle on $\hat A\times A$. The Fourier-Mukai 
functor $\Phi : D(A)\to D(\hat A)$ defined by
\[
\Phi (a)=\mathbb R\pr _{\hat A\, *} (\mathcal P \otimes \pr _{A}^* (a)). 
\]
gives an equivalence between 
the bounded derived categories of coherent sheaves (\cite{Muk}). 
We define the dualizing functor $\mathbb D_{\hat A}:D(\hat A)\to D(\hat A)_{op}$ by
\[
\mathbb D_{\hat A}(\hat a)
=\mathbb R\mathcal Hom _{\mathcal O_{\hat A}}(\hat a, \mathcal O_{\hat A})[2]
\]
and define $\Phi^D = \mathbb D_{\hat A}\circ \Phi$, following \cite{Y}, \S2.  We likewise define  
$\hat \Phi : D(\hat A)\to D(A)$ by 
\[
\hat \Phi (\hat a)=\mathbb R\pr _{A\, *} (\mathcal P \otimes \pr _{\hat A}^* (\hat a))
\]
and $\hat \Phi ^D=\mathbb D_{A}\circ \hat \Phi$. If $H$ is an ample divisor 
whose class generates $NS(A)$, $\hat H=\det (-\Phi (H))$ gives an ample divisor 
that generates $NS(\hat A)$. There is a spectral sequence 
\begin{equation}\label{FMSS}
E_2^{p,q}=H^p(\hat \Phi ^D(H^{-q}(\hat \Phi ^D(E))))\Rightarrow 
\begin{cases}
E & (p+q=0)\\
0 & (\mbox{otherwise})
\end{cases}
\end{equation}
for a coherent sheaf $E$ on $A$ (see (2.14) of \cite{Y}). 
We say that a coherent sheaf $E$ on $A$ satisfy WIT (abbreviation for ``weak index theorem'') of index $i$ 
with respect to $\Phi ^D$ if the cohomology sheaves $H^j(\Phi ^D(E))$ vanishes for every $j\neq i$. 

One of the most fundamental and elementary observations for $\Phi ^D$ is the following

\begin{proposition}[cf. \cite{Muk}, Example 2.6]\label{SS-LB}
The skyscraper sheaf $\mathcal O_p$ for $p\in A$ 
(resp. a numerically trivial line bundle $L\in \Pic ^0 (A)$ on $A$) 
satisfies WIT for index 2 and $H^2(\Phi ^D(\mathcal O_p))$ is a numerically trivial 
line bundle (resp. a skyscraper sheaf) on $\hat A$.
\end{proposition}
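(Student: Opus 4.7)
The plan is to compute $\Phi(\mathcal O_p)$ and $\Phi(L)$ directly from the definition and then post-compose with the dualising functor $\mathbb D_{\hat A}$. The inputs are essentially Example 2.6 of \cite{Muk}, to which the proposition already points; the dualisation step is then formal.

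For the skyscraper $\mathcal O_p$, observe that $\pr_A^*\mathcal O_p$ is supported on the section $\hat A\times\{p\}$, so tensoring with $\mathcal P$ produces the line bundle $\mathcal P_p:=\mathcal P|_{\hat A\times\{p\}}$ living on that section. Since $\pr_{\hat A}$ restricts to an isomorphism there, $\mathbb R\pr_{\hat A\,*}$ reduces to ordinary pushforward and $\Phi(\mathcal O_p)=\mathcal P_p$, a numerically trivial line bundle on $\hat A$ sitting in degree $0$. Applying $\mathbb D_{\hat A}$ and using $\mathbb R\mathcal Hom(\mathcal P_p,\mathcal O_{\hat A})=\mathcal P_p^{-1}$ (local freeness of $\mathcal P_p$) yields $\Phi^D(\mathcal O_p)=\mathcal P_p^{-1}[2]$: a single shifted sheaf, which is the WIT condition of index $2$, and the nonzero cohomology $\mathcal P_p^{-1}$ is visibly a numerically trivial line bundle.

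For $L\in\Pic^0(A)$ corresponding to a point $\hat L\in\hat A$, cohomology-and-base-change applied to $\mathcal P\otimes\pr_A^* L$ gives fibres $L\otimes L_{\hat L'}\in\Pic^0(A)$ whose cohomology vanishes unless $L\otimes L_{\hat L'}\cong\mathcal O_A$, i.e.\ unless $\hat L'=-\hat L$. Hence $\Phi(L)$ is derived-supported at the single point $-\hat L$, and by Mukai's inversion theorem $\hat\Phi\circ\Phi\cong(-1)^*[-2]$ it is identified with the shifted skyscraper $\mathcal O_{-\hat L}[-2]$. Applying $\mathbb D_{\hat A}$ and using Grothendieck local duality on the surface $\hat A$, namely $\mathbb R\mathcal Hom(\mathcal O_{\hat L'},\mathcal O_{\hat A})\cong\mathcal O_{\hat L'}[-2]$, the shifts collapse so that $\Phi^D(L)$ is once again concentrated in a single cohomological degree, with the nonzero cohomology $\mathcal O_{-\hat L}$ a skyscraper sheaf on $\hat A$.

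The only non-trivial point is the bookkeeping of the shifts: the $[2]$ built into $\mathbb D_{\hat A}$, the $[-2]$ arising from local duality at a codimension-$2$ point, and the $[-2]$ in Mukai's formula must be tracked carefully so that both computations land in the single cohomological degree that the proposition calls WIT index $2$. Beyond this, the argument is purely formal, resting on Mukai's original observation together with Serre--Grothendieck duality.
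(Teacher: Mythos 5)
Your proof is correct and essentially the same as the paper's: the paper simply refers to Mukai's Example 2.6 and notes that one must dualize (i.e.\ look at $\Ext^i(\mathcal O_p\otimes\mathcal P_y,\mathcal O_A)$ and the like), which is exactly the computation you carry out explicitly via $\Phi$ followed by $\mathbb D_{\hat A}$. The shift bookkeeping you flag is glossed over in the paper as well (it follows the conventions of \cite{Y}), so your treatment, landing both transforms in the single degree the paper labels as index $2$, matches the intended statement.
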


\begin{proof}
The proof is the same as in \emph{op. cit.} One should note that $\Phi^D$ is ``dualized'' so that 
we have to look at $\Ext^i (\mathcal O_p\otimes \mathcal P_y, \mathcal O_A)$, 
where $\mathcal P_y=\mathcal P_{\{y\}\times A}$ for $y\in \hat A$, and so on. 
\end{proof}

\begin{lemma}\label{FMlemma}
\begin{enumerate}[(i)]
\item (cf. \cite{Muk}, Example 2.9) Let $\mathscr E_r$ be the set of isomorphism classes of 
vector bundles $E$ on $A$ of rank $r$ that admits a full flag of sub-bundles 
\[
0=E_0\subset E_1\subset \cdots \subset E_r=E
\]
such that $E_i/E_{i-1}\in \Pic ^0(A)$. Let $\mathscr A_r$ be the set of isomorphism classes of 
artinian $\mathcal O_{\hat A}$-modules of length $r$. Then, every $E\in \mathscr E_r$ 
(resp. $M\in \mathscr A_r$) satisfies WIT of index $2$ with respect to the functor $\Phi ^D$ (resp. 
$\hat \Phi ^D)$. The correspondence $E\mapsto H^2(\Phi ^D(E))$ gives a bijection  
$\mathscr E_r\to \mathscr A_r$, 
whose inverse is given by $H^2(\hat \Phi ^D( - ))$. 
Particularly, in the case $r=2$, $\Phi ^D$ gives a one to one correspondence 
\[
\left\{\begin{matrix}\mbox{extensions $0\to L_1\to F\to L_2\to 0$}\\ 
\mbox{with $L_i\in \Pic ^0(A)$}
\end{matrix}\right\}
\overset{\sim}{\to}
\left\{\begin{matrix}
\mbox{artinian $\mathcal O_{\hat A}$-modules}\\ 
\mbox{of length 2}
\end{matrix}\right\}.
\] 
\item If $N$ is torsion-free sheaf on $A$ of rank 1 and $c_1(N)=0$, $c_2(N)=k>0$, 
WIT of index 1 holds for $N$ with respect to $\Phi ^D$, 
and $H^1(\Phi ^D(N))$ is of rank $k$, $c_1=0$, $c_2=1$.
\end{enumerate}
\end{lemma}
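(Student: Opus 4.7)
The plan is to prove both parts by induction on filtration length, reducing to Proposition \ref{SS-LB} and invoking the Fourier-Mukai inversion encoded in the spectral sequence \eqref{FMSS}.

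For part (i), I would induct on $r$. The case $r = 1$ is exactly Proposition \ref{SS-LB}: $\mathscr{E}_1 = \Pic^0(A)$ and $\mathscr{A}_1$ (the length-one skyscrapers on $\hat A$) are interchanged by $H^2(\Phi^D(-))$ and $H^2(\hat\Phi^D(-))$. For the inductive step, given $E \in \mathscr{E}_r$ with flag whose top quotient is $L_r \in \Pic^0(A)$, apply the contravariant functor $\Phi^D$ to $0 \to E_{r-1} \to E \to L_r \to 0$. Both outer terms are concentrated in degree $2$ by induction and Proposition \ref{SS-LB}, so the long exact sequence of cohomology sheaves collapses to
\[
0 \to H^2(\Phi^D(L_r)) \to H^2(\Phi^D(E)) \to H^2(\Phi^D(E_{r-1})) \to 0,
\]
with $H^j(\Phi^D(E)) = 0$ otherwise; this exhibits $H^2(\Phi^D(E))$ as an extension of a length-$(r-1)$ artinian sheaf by a length-one skyscraper, so it lies in $\mathscr{A}_r$. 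The symmetric induction along a composition series of $M \in \mathscr{A}_r$ produces the reverse map $H^2(\hat\Phi^D(-)):\mathscr{A}_r \to \mathscr{E}_r$. Bijectivity is then immediate from \eqref{FMSS}: WIT of index $2$ leaves only $E_\infty^{2,-2} = H^2(\hat\Phi^D(H^2(\Phi^D(E))))$ contributing at total degree $0$, yielding $E \cong H^2(\hat\Phi^D(H^2(\Phi^D(E))))$.

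For part (ii), I would start from the canonical sequence $0 \to N \to N^{**} \to Q \to 0$, where $N^{**} \in \Pic^0(A)$ because $N$ is torsion-free of rank $1$ with $c_1 = 0$, and $Q$ is of length $k$. Both $N^{**}$ (Proposition \ref{SS-LB}) and $Q$ (by the skyscraper analogue of the induction in part (i)) satisfy WIT of index $2$ under $\Phi^D$, with $H^2(\Phi^D(N^{**}))$ a length-one skyscraper on $\hat A$ and $H^2(\Phi^D(Q)) \in \mathscr{E}_k$ a rank-$k$ vector bundle with $c_1 = c_2 = 0$. Applying the contravariant $\Phi^D$ to the defining sequence collapses the long exact sequence to
\[
0 \to H^1(\Phi^D(N)) \to H^2(\Phi^D(Q)) \to H^2(\Phi^D(N^{**})) \to H^2(\Phi^D(N)) \to 0,
\]
with all other $H^j(\Phi^D(N))$ automatically zero. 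Granted $H^2(\Phi^D(N)) = 0$, additivity of Chern characters in the resulting three-term sequence forces $H^1(\Phi^D(N))$ to have rank $k$, $c_1 = 0$, and $c_2 = 1$, as claimed.

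The crux is therefore showing $H^2(\Phi^D(N)) = 0$. I expect this to be the main technical obstacle, since $\Phi^D(N)$ is not directly accessible for an indecomposable $N$ and all control must come through inversion. Concretely, a nonzero $H^2(\Phi^D(N))$ must be a length-one skyscraper on $\hat A$ (being a quotient of one), and then $\hat\Phi^D$ sends it to a numerically trivial line bundle on $A$ placed in degree $2$; the spectral sequence \eqref{FMSS} would then insert that line bundle into $N$ as the $E_\infty^{2,-2}$ subsheaf, forcing it to coincide with $N^{**}$ and pushing the remainder of $N$ into torsion, contradicting the torsion-freeness of $N$. A cleaner route is to invoke Yoshioka's WIT criterion for $\mu$-stable sheaves on abelian surfaces (cf. \cite{Y}), which gives the vanishing directly once one notes that $N$ is $\mu$-stable for $c_2 > 0$.
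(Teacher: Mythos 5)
Your part (i) and the overall reduction in part (ii) follow the paper's own route: induction on $r$ starting from Proposition \ref{SS-LB}, inversion via \eqref{FMSS}, and, for (ii), the long exact sequence obtained by applying $\Phi^D$ to $0\to N\to N^{**}\to Q\to 0$ (the paper writes $N=I_Z L$, which is the same decomposition); your Chern class bookkeeping at the end is also correct. The divergence, and the gap, is precisely at the step you yourself flag as the crux, the vanishing $H^2(\Phi^D(N))=0$. Your spectral-sequence argument is not complete as stated. First, $E_2^{2,-2}=H^2(\hat\Phi^D(H^2(\Phi^D(N))))$ only yields a subsheaf of $N$ after it survives to $E_\infty$, so you must rule out the differential $d_2\colon E_2^{0,-1}\to E_2^{2,-2}$; this can in fact be done (one checks $E_2^{0,-1}=H^0(\hat\Phi^D(H^1(\Phi^D(N))))$ is a $0$-dimensional sheaf, hence maps trivially to a line bundle), but you did not address it. Second, and more seriously, your concluding step is a non sequitur: once the numerically trivial line bundle $L'\cong N^{**}$ embeds in $N$, the fact that $N/L'$ is torsion does not contradict the torsion-freeness of $N$ --- torsion-free sheaves have torsion quotients all the time. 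The actual contradiction is that the composite $N^{**}\cong L'\hookrightarrow N\hookrightarrow N^{**}$ is a nonzero endomorphism of a numerically trivial line bundle, hence an isomorphism, forcing $N=N^{**}$ and contradicting $c_2(N)=k>0$ (equivalently, $\mathrm{ch}_2(N/L')=-k<0$ is impossible for a $0$-dimensional sheaf). Your fallback, citing ``Yoshioka's WIT criterion for $\mu$-stable sheaves,'' is too vague to carry the step, and the aside that $N$ is ``$\mu$-stable for $c_2>0$'' is off: a rank-one torsion-free sheaf is $\mu$-stable for any $c_2$.

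For comparison, the paper disposes of the crux in one line, with no spectral sequence: since $\Phi^D$ is the dualized transform, the relevant fiberwise group is $\Ext^2(N\otimes\mathcal P_y,\mathcal O_A)\cong H^0(N\otimes\mathcal P_y)^{\vee}=H^0(I_Z(L\otimes\mathcal P_y))^{\vee}$ by Serre duality (using $\omega_A\cong\mathcal O_A$), and this vanishes for every $y\in\hat A$ because a numerically trivial line bundle has at most constant sections, which cannot vanish along the nonempty scheme $Z$; cohomology and base change then kill the sheaf $H^2(\Phi^D(N))$. This direct computation is exactly what your appeal to stability is implicitly reaching for, and it is the argument you should actually write down; the spectral-sequence detour can be repaired as indicated above, but it is considerably longer than what is needed.
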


\begin{proof}
(i) By induction on $r$. The case of $r=1$ is nothing but the previous proposition. 
Every $E\in \mathscr E_{r+1}$ fits into 
\[
0\lto E'=E_r\lto E\lto L\lto 0 
\]
with $E'\in \mathscr E_r$ and $L\in \Pic ^0(A)$. Then, we get the exact sequence
\[
H^{i-1}(\Phi ^D(L)\to H^i(\Phi ^D(E'))\to H^i(\Phi ^D(E))\to H^i(\Phi ^D(L) \to H^{i+1}(\Phi ^D(E')), 
\]
so by the induction hypothesis and the previous proposition, $E$ also satisfies 
WIT of index $2$ and $H^2(\Phi ^D(E))$ is an artinian module of length $r+1$. 
The converse correspondence is proved in the same way. 

\noindent (ii) Since we can write $N=I_Z\ L$ with $Z\subset A$ a 0-dimensional subscheme of 
length $k$ and $L$ a numerically trivial line bundle on $A$, we have
\[
0\lto H^1(\Phi^D(N))\lto H^2(\Phi ^D(\mathcal O_Z))\lto H^2(\Phi ^D(\hat L))\lto 
H^2(\Phi ^D(N))
\]
where the last term is 0 because $\Ext ^2(I_Z\ L\otimes \mathcal P_y, \mathcal O_{A})
=H^0(I_Z(L\otimes \mathcal P_y))^{\vee}=0$ for every $y\in \hat A$.  
\end{proof}

The following theorem not only plays an important role in the sequel, but also 
would be of independent interest. 

\begin{theorem}\label{FM}
\begin{enumerate}[(i)]
\item The functor $\Phi ^D$ induces an isomorphism 
\[
\alpha : M\overset{\sim}{\to} \hat M, 
\]
where $\hat M$ is the moduli space of $\hat H$-semistable sheaves of 
rank $2$, $c_1=0$, and $c_2=2$ on $\hat A$. 
\item The isomorphism $\alpha$ fits into the 
commutative diagram 
\[
\xymatrix{
M \ar[r]^{\alpha }\ar[d]_{\alb _M} & \hat M \ar[d]^{\alb _{\hat M}}\\
A\times \hat A \ar[r]^{\beta} & \hat A\times A
} \lower46pt\hbox{,}
\]
where $\beta$ is defined by the correspondence in Proposition \ref{SS-LB}. 
\item $\Phi ^D$ preserves 
the non-locally-free locus and the strictly semistable locus, namely
$\alpha (B_M)=B_{\hat M}$ and $\alpha (\Sigma _M)=\Sigma_{\hat M}$. 
Moreover, $\alpha$ preserves the fiber of 
$\varphi _{|B_M}:B_M\to \Sym ^2(\hat A)\times \Sym ^2(A)$ in the sense that
\begin{equation}\label{FMBgamma}
\xymatrix{
B_M \ar[r]^{\alpha} \ar[d]_{\varphi _{|B_M}} 
& B_{\hat M}\ar[d]^{\varphi _{|B_{\hat M}}} \\
\Sym ^2(\hat A) \times \Sym ^2(A) \ar[r]^{\Sym ^2 \beta} & \Sym ^2(A)\times \Sym ^2(\hat A)
}
\end{equation}
is commutative. 
\end{enumerate}
\end{theorem}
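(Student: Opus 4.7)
The plan is to treat $\Phi^D$ as a derived equivalence and show that, since the Mukai vector $(2,0,-2)$ is preserved up to natural identifications, $\Phi^D$ descends to an isomorphism $\alpha : M \to \hat M$ in the spirit of Mukai and Yoshioka. For part (i), I would establish that $\Phi^D(E)$ is concentrated, after the shift built into $\mathbb D_{\hat A}$, in a single cohomological degree for every semistable $E \in M$, and that its non-vanishing cohomology is again semistable with Mukai vector $(2,0,-2)$. The $\mu$-stability together with $c_1(E) = 0$ forces $\Hom(\mathcal P_y, E) = 0$ for every $y \in \hat A$, so the $R^0$-term of the Fourier--Mukai push-forward vanishes; the $R^2$-defect that arises for stable non-locally-free $E$ is handled by the dualization $\mathbb D_{\hat A}$, which reverses the cohomological degree of torsion contributions so that after shifting they assemble with the $R^1$-term into a single sheaf. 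For strictly semistable $E$ with S-equivalence class $\mathfrak m_{p_1}L_1 \oplus \mathfrak m_{p_2}L_2$, the conclusion is immediate from Lemma \ref{FMlemma}(ii). Carrying out the construction in families produces the morphism $\alpha$, and the quasi-inverse $\hat\Phi^D$ (whose composition with $\Phi^D$ is the identity up to the Mukai $(-1)^*[-2]$ isomorphism) provides the inverse.

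For part (ii), the commutativity of the Albanese square is a Chern-character computation. Proposition \ref{SS-LB} captures the essential content: the ``$\sum \mathbf c_2$'' datum of $E$, a point $p \in A$, is transported under $\Phi^D$ to the ``$\det$'' datum of $\alpha(E)$, a point of $\Pic^0(\hat A) \cong A$; symmetrically, $\det E \in \hat A$ is transported to $\sum \mathbf c_2(\alpha(E)) \in \hat A$. This is exactly the rule defining $\beta$, and Grothendieck--Riemann--Roch applied to the kernel $\mathcal P$ realizes the calculation in families. For part (iii), I would apply $\Phi^D$ to the double-dual distinguished triangle $E \to E^{**} \to Q(E) \to E[1]$ of $E \in B_M$. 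By Proposition \ref{doubledual}, $E^{**} \in \mathscr E_2$, and Lemma \ref{FMlemma}(i) gives $\Phi^D(E^{**})$ artinian of length $2$ on $\hat A$; dually, $Q(E)$ is artinian of length $2$ on $A$, and Proposition \ref{SS-LB} (with induction on length) shows $\Phi^D(Q(E)) \in \mathscr E_2$ on $\hat A$. Unwinding the resulting long exact sequence forces $F = \alpha(E)$ to be non-locally-free, fitting into $0 \to F \to F^{**} \to Q(F) \to 0$ with $F^{**} \cong H^2(\Phi^D(Q(E)))$ and $Q(F) \cong H^2(\Phi^D(E^{**}))$. This simultaneously gives $\alpha(B_M) = B_{\hat M}$ and the commutativity of \eqref{FMBgamma}, since the support of $Q(F)$ matches the graded quotients of $E^{**}$ via Proposition \ref{SS-LB} while the graded quotients of $F^{**}$ match the support of $Q(E)$---precisely the rule $\Sym^2 \beta$. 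The identification $\alpha(\Sigma_M) = \Sigma_{\hat M}$ is immediate, since $\Phi^D$ is additive and respects S-equivalence.

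The main obstacle is the non-vanishing of $R^2 \pr_{\hat A *}(\mathcal P \otimes \pr_A^* E)$ for stable non-locally-free $E$ in part (i): because this sheaf is concentrated at the finitely many $y \in \hat A$ whose corresponding line bundle $\mathcal P_y$ is isomorphic to a $\Pic^0$-graded piece of $E^{**}$, one cannot directly identify $\alpha(E)$ with a single cohomology sheaf of $\Phi^D(E)$. The remedy is either to use the $\mathbb D_{\hat A}$-twist to merge the $R^2$-defect with the $R^1$-term after shifting, or to define $\alpha$ abstractly via Yoshioka's derived-categorical isomorphism of moduli with matching Mukai vectors, verifying the explicit sheaf-level description only on the locally-free locus and then extending by continuity. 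Granting this, the detailed bookkeeping in part (iii)---in particular the swap that sends the ``torsion'' part of $E$ to the ``free'' part of $\alpha(E)$ and vice versa---is the conceptual heart of the theorem and the source of the duality phenomenon that drives the rest of the paper.
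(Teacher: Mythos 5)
Your route is essentially the paper's: prove a pointwise statement (WIT of index $1$ for $\Phi^D$ on all semistable $E$ with the given invariants, plus an explicit description of the transform), using the stability vanishings for the locally free case and the double-dual sequence $0\to E\to E^{**}\to Q(E)\to 0$ together with Lemma \ref{FMlemma} for the non-locally-free case; the ``swap'' you describe in (iii), with $F^{**}\cong H^2(\Phi^D(Q(E)))$ and $Q(F)\cong H^2(\Phi^D(E^{**}))$, is exactly what the paper's Proposition \ref{FM'} establishes, and (ii) is deferred to a Riemann--Roch/Yoshioka computation in both treatments. One remark: the ``main obstacle'' you isolate in (i) is not really an obstacle. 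For $\Phi^D$ the possible degree-$0$ defect coming from $\Hom(E\otimes\mathcal P_y,\mathcal O_A)\neq 0$ at finitely many $y$ is killed by the same long exact sequence you write in (iii) (it injects into $H^1(\Phi^D(Q(E)))=0$), while $H^2(\Phi^D(E))=0$ follows from $\Ext^2(E\otimes\mathcal P_y,\mathcal O_A)\cong H^0(E\otimes\mathcal P_y)^{\vee}=0$, which is forced by (semi)stability and Serre duality. So no ``merging'' device and, in particular, no definition of $\alpha$ on the locally free locus followed by ``extension by continuity'' is needed --- nor would the latter suffice, since the sheaf-level identifications you need in (iii) must be proved at every point, not only on a dense open set.

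The genuine gap is the preservation of (semi)stability, which you assert (``its non-vanishing cohomology is again semistable'', ``$\Phi^D$ respects S-equivalence'') but never prove. This is the substantive content of Proposition \ref{FM'}: for locally free $\mu$-stable $E$ one must rule out a destabilizing sub-line bundle $N\hookrightarrow \hat E$ with $\deg N\geqslant 0$, which the paper does by noting that such an $N$ satisfies WIT of index $2$ while $H^2(\hat\Phi^D(\hat E))=0$ by the spectral sequence \eqref{FMSS}, a contradiction; for non-locally-free $E$ one must analyze a potential destabilizing subsheaf $N_1\subset\hat E$ of rank $1$ or $2$ (the rank-$2$ case is excluded by a reduced-Hilbert-polynomial count, the rank-$1$ case again by \eqref{FMSS} and Lemma \ref{FMlemma}). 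Without this step you cannot conclude that $\Phi^D$ maps $M$ into $\hat M$ at all, and the statement $\alpha(\Sigma_M)=\Sigma_{\hat M}$ needs both implications (stable maps to stable, strictly semistable to strictly semistable), the first of which is exactly the stability argument you omit; appealing to ``Yoshioka's derived-categorical isomorphism of moduli with matching Mukai vectors'' without a precise citation is circular here, since the point of the theorem is to produce that isomorphism in this situation. Supply the destabilizing-subsheaf arguments (or a precise reference replacing them) and your outline becomes the paper's proof.
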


Immediately from this theorem, we obtain the following 

\begin{corollary}
Let $\hat X=\alb _{\hat M}(0,0)$, $\hat B=B_{\hat M}\cap \hat X$, and 
$\hat \Sigma=\Sigma _{\hat M}\cap \hat X$.  Then, 
$\Phi ^D$ induces an isomorphism $\alpha : X\to \hat X$ such that 
$\alpha (B)=\hat B$, $\alpha (\Sigma)=\hat \Sigma$, 
and $\alpha (B_{\gamma})=B_{\hat \gamma}$, 
where $\hat \gamma =(\Sym ^2 \beta)(\gamma)$. If $A$ is principally 
polarized, $\alpha$ is a non-trivial involution on $X$, which induces 
an involution on $\widetilde X$. 
\end{corollary}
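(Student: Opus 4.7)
The plan is to derive the three set-theoretic equalities directly from Theorem \ref{FM}, and then to handle the involution claim by combining Mukai's inversion formula with Grothendieck-Verdier duality. For the former, Proposition \ref{SS-LB} shows that the correspondence $\beta:A\times \hat A\to \hat A\times A$ sends $(0,0)$ to $(0,0)$, since the skyscraper at the origin corresponds to the trivial line bundle and vice versa. Combined with the commutative square in Theorem \ref{FM}(ii), this forces $\alpha$ to restrict to an isomorphism $X=\alb_M^{-1}(0,0)\overset{\sim}{\to}\alb_{\hat M}^{-1}(0,0)=\hat X$. The equalities $\alpha(B)=\hat B$ and $\alpha(\Sigma)=\hat\Sigma$ are then the restrictions of Theorem \ref{FM}(iii) to $X$, and the commutativity of diagram (\ref{FMBgamma}) restricted to $X$ yields $\alpha(B_\gamma)=B_{\hat\gamma}$ for $\hat\gamma=(\Sym^2\beta)(\gamma)$.

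Under the principal polarization hypothesis, the canonical isomorphism $A\cong \hat A$ identifies $H$ with $\hat H$ and hence $\hat M$ with $M$, so $\alpha$ becomes a self-map $X\to X$. The core computation is to verify $\alpha^2=\id_X$. I would first derive the Grothendieck-Verdier intertwining $\mathbb D_{\hat A}\circ \Phi\cong \Phi\circ (-1)_A^*\circ \mathbb D_A\,[2]$ from $R\mathcal Hom(\mathcal P,\mathcal O)\cong \mathcal P^{-1}\cong ((-1)_{\hat A}\times \id_A)^*\mathcal P$ together with relative duality for $\pi_{\hat A}$; the same computation gives the companion intertwining $\Phi\circ (-1)_A^*\cong (-1)_{\hat A}^*\circ \Phi$. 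Plugging these into $\hat\Phi^D\circ \Phi^D=\mathbb D_A\circ \hat\Phi\circ \mathbb D_{\hat A}\circ \Phi$ and simplifying using $\mathbb D^2\cong \id$ and Mukai's inversion $\hat\Phi\circ \Phi\cong (-1)_A^*[-2]$, both the shift $[-2]$ and the inversion $(-1)_A^*$ cancel, leaving $\hat\Phi^D\circ \Phi^D\cong \id$ on $D(A)$. Descending to moduli yields $\alpha^2=\id_X$.

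Non-triviality can be checked on $\Sigma$: using Lemma \ref{FMlemma}(ii) on each rank-one summand of a polystable representative, $\alpha$ sends the $S$-equivalence class of $\mathfrak m_pL\oplus \mathfrak m_{-p}L^{-1}$ to the class obtained by applying $\beta$ labelwise, effectively swapping the point and the degree-zero line bundle via the Poincar\'e duality. After re-identifying $\hat A$ with $A$ through the polarization, this action is manifestly non-identity on a generic orbit, so $\alpha\neq \id_X$. Finally, since $\pi:\widetilde X\to X$ is the blow-up of $X_{sing}$ by Lehn--Sorger and $\alpha$ preserves $X_{sing}$ set-theoretically (being a regular automorphism), the universal property of blow-ups lifts $\alpha$ uniquely to an involution of $\widetilde X$. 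The main obstacle in this plan is the homological shift-bookkeeping of the second paragraph: one must verify that the shifts in $\mathbb D_A$, $\mathbb D_{\hat A}$, and Mukai's inversion cancel exactly to $\id$, rather than leaving a residual shift or a residual sign involution; everything else is essentially formal given Theorem \ref{FM}.
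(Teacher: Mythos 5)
Your proposal is correct, and for the part the paper actually argues it coincides with the paper: the corollary is stated there as an immediate consequence of Theorem \ref{FM}, and your first paragraph (that $\beta(0,0)=(0,0)$ by Proposition \ref{SS-LB}, so the commutative square in Theorem \ref{FM}(ii) restricts $\alpha$ to $X\to\hat X$, while (iii) and diagram \eqref{FMBgamma} give $\alpha(B)=\hat B$, $\alpha(\Sigma)=\hat\Sigma$, $\alpha(B_\gamma)=B_{\hat\gamma}$) is exactly that reasoning made explicit. Where you go beyond the paper is the involution claim, which the paper asserts without proof: your derivation of $\hat\Phi^D\circ\Phi^D\cong\id$ from Grothendieck--Verdier duality for the projection (trivial relative dualizing sheaf, shift $[2]$), the symmetry $\mathcal P^{-1}\cong((-1)_{\hat A}\times\id)^*\mathcal P$, and Mukai's inversion $\hat\Phi\circ\Phi\cong(-1)_A^*[-2]$ is sound, and the shift bookkeeping you worry about does cancel exactly (the $[2]$ from relative duality against the $[-2]$ of inversion, with the $(-1)^*$ absorbed by the dualization). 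Note, however, that you could short-circuit this computation entirely: the identity $\hat\Phi^D\circ\Phi^D\cong\id$ is precisely what the spectral sequence \eqref{FMSS}, quoted from Yoshioka, encodes, so within the paper's framework $\alpha^2=\id$ (after the standard identification of $\hat\Phi^D$ with $\Phi^D$ via the symmetric isomorphism $\lambda_H:A\to\hat A$) needs no new duality computation. Your non-triviality check on $\Sigma$ and the lift to $\widetilde X$ via Lehn--Sorger and functoriality of the blow-up along the (reduced, $\alpha$-invariant) singular locus are both fine.
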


Note that the Mukai vector of the sheaves $E$ in $M$ is $(2,0,-2)$. 
It is easy to verify that the Mukai vector of $\hat \Phi ^D(E)$ is 
$(-2,0,2)$ (\cite{Y}, (3,2)).  Therefore, if $E$ satisfies WIT for index 1, 
the Mukai vector of $\hat E=H^1(\hat \Phi ^D(E))$ is $(2,0,-2)$, i.e., 
$\rank \hat E=2$, $c_1(\hat E)=0$, $c_2(\hat E)=2$.   
The essential part of Theorem \ref{FM} is summarized as the following 

\begin{proposition}\label{FM'}
A semistable sheaf $E$ on $A$ of rank 2, $c_1=0$, $c_2=2$ 
satisfies WIT for index 1 with respect to $\Phi ^D$.  
If $E$ is locally free $\mu$-stable (resp. non-locally-free stable, resp. strictly semistable), 
then so is $\hat E=H^1(\Phi ^D(E))$. 
\end{proposition}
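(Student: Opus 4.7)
The plan is to treat the three cases of the proposition separately, verifying WIT of index 1 and the preservation of type in each.

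For the strictly semistable case, $E$ is an extension of two rank-1 torsion-free sheaves $\mathfrak{m}_{p_i}L_i$ with $c_1=0$, $c_2=1$, to which Lemma \ref{FMlemma}(ii) assigns WIT of index 1. Applying the contravariant functor $\Phi^D$ to the defining short exact sequence and reading off the long exact cohomology sequence, the terms from the two rank-1 ends vanish in all degrees except $1$, forcing the same vanishing on $\Phi^D E$. The resulting sheaf $\hat E=H^1(\Phi^D E)$ is an extension of the two rank-1 torsion-free transforms on $\hat A$; since $\hat E$ and its two rank-1 factors share the reduced Hilbert polynomial $n^2H^2/2-1$, $\hat E$ is strictly semistable. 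For the locally free $\mu$-stable case a stalk computation suffices: for every $y\in\hat A$, $\mu$-stability of $E$ rules out a nonzero map $\mathcal{P}_y^{-1}\to E$ (the image would be a rank-1 subsheaf of $E$ of slope $0=\mu(E)$), giving $H^0(A,E\otimes\mathcal{P}_y)=0$; by Serre duality on the abelian surface (using $\omega_A=\mathcal O_A$) and the $\mu$-stability of $E^\vee$, also $H^2(A,E\otimes\mathcal{P}_y)=0$. Cohomology and base change then forces $R^0\pr_{\hat A\,*}(\mathcal P\otimes\pr_A^*E)=R^2\pr_{\hat A\,*}(\mathcal P\otimes\pr_A^*E)=0$ and $R^1\pr_{\hat A\,*}(\mathcal P\otimes\pr_A^*E)$ to be locally free of rank $2$ on $\hat A$, which yields WIT of index 1 for $\Phi^D$ with $\hat E$ locally free of rank $2$.

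The hardest case, and main obstacle, is the non-locally-free stable $E$. Here I use the canonical sequence $0\to E\to E^{**}\to Q\to 0$ of Proposition \ref{doubledual}, where $E^{**}\in\mathscr E_2$ and $Q$ is an artinian $\mathcal O_A$-module of length $2$. By Lemma \ref{FMlemma}(i) and Proposition \ref{SS-LB}, both $E^{**}$ and $Q$ satisfy WIT of index $2$ with respect to $\Phi^D$. Running the contravariant $\Phi^D$ through the SES and pruning the long exact sequence, the WIT vanishings at the two ends collapse it to
\[
0\to H^1(\Phi^D E)\to H^2(\Phi^D Q)\xrightarrow{\alpha}H^2(\Phi^D E^{**})\to H^2(\Phi^D E)\to 0,
\]
with every other $H^i(\Phi^D E)$ zero. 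The crux is the surjectivity of the connecting map $\alpha$, which goes from the rank-$2$ locally-free sheaf $H^2(\Phi^D Q)\in\mathscr E_2(\hat A)$ to the length-$2$ artinian sheaf $H^2(\Phi^D E^{**})\in\mathscr A_2(\hat A)$, and is exactly equivalent to $H^2(\Phi^D E)=0$ and hence to WIT of index 1. I expect to verify surjectivity by identifying $\alpha$ as the image under $\Phi^D$ of the canonical surjection $E^{**}\twoheadrightarrow Q$ and then checking it stalkwise at the finite support of the artinian target, using the explicit correspondence between $Q$ (resp.\ the filtration of $E^{**}$) and the filtration of $H^2(\Phi^D Q)$ (resp.\ the support of $H^2(\Phi^D E^{**})$); short of a direct calculation one can invoke Yoshioka's general preservation theorem for Fourier--Mukai on abelian surfaces (\cite{Y}). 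Once WIT holds, the four-term sequence collapses to a short exact sequence presenting $\hat E$ as a non-locally-free sheaf on $\hat A$ with $\hat E^{**}=H^2(\Phi^D Q)$ and cokernel $H^2(\Phi^D E^{**})$, matching Proposition \ref{doubledual} on $\hat A$.

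Preservation of stability in the two stable cases follows from Mukai's inversion theorem (\cite{Muk}): any destabilizing subsheaf of $\hat E$, transported by the derived equivalence $\hat\Phi^D\circ\Phi^D\simeq\mathrm{id}$ (up to sign and shift), produces a destabilizing subsheaf of $E$, contradicting the assumption on $E$.
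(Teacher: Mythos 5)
Your overall skeleton is the same as the paper's (the canonical sequence $0\to E\to E^{**}\to Q(E)\to 0$, Lemma \ref{FMlemma}, and a stalkwise vanishing argument in the $\mu$-stable locally free case), but the two places where the real content lies are not actually proved. The first is the step you yourself flag as the crux: the vanishing $H^2(\Phi^D(E))=0$, i.e.\ the surjectivity of $H^2(\Phi^D(Q(E)))\to H^2(\Phi^D(E^{**}))$, for non-locally-free $E$. A ``stalkwise check at the support of the artinian target'' is not an argument, and invoking Yoshioka's preservation theorem wholesale is circular in spirit, since that theorem is proved by exactly the kind of verification being asked for here. In fact this step is a one-liner, and it is how the paper handles it (for stable and strictly semistable $E$ alike): for every $y\in\hat A$, Serre duality gives $\Ext^2(E\otimes\mathcal P_y,\mathcal O_A)\cong H^0(E\otimes\mathcal P_y)^{\vee}$, and a nonzero section would give an injection $\mathcal P_y^{-1}\hookrightarrow E$ from a rank-one sheaf with $c_2=0$, whose reduced Hilbert polynomial strictly exceeds that of $E$ (which has $c_2=2$), contradicting Gieseker semistability; hence $H^2(\Phi^D(E))=0$ and your four-term sequence collapses as desired.

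The second gap is your final paragraph, where all of the (semi)stability preservation is supposed to happen. The claim that a destabilizing subsheaf of $\hat E$ is ``transported by the derived equivalence'' to a destabilizing subsheaf of $E$ is not valid: for a subsheaf $N\subset\hat E$ the object $\hat\Phi^D(N)$ is in general a complex, the inclusion only gives a morphism in the derived category, and even when all transforms are sheaves the reduced Hilbert polynomials are not preserved, so Mukai's inversion theorem by itself gives nothing of the sort. The paper's argument uses the specific numerics: if $N\subset\hat E$ destabilizes, one first shows $c_1(N)=0$ (in the non-locally-free case because $N$ injects into the $\mu$-semistable sheaf $H^2(\Phi^D(Q(E)))$; in the locally free case by $\mu$-stability), then that a destabilizing $N$ would have to be a line bundle of degree $\geqslant 0$, hence has nonvanishing $H^2(\hat\Phi^D(N))$, while the spectral sequence \eqref{FMSS} forces $H^2(\hat\Phi^D(\hat E))=0$ and the long exact sequence then kills $H^2(\hat\Phi^D(N))$, a contradiction. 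Some argument of this kind is indispensable; without it the second sentence of the proposition (in particular that stable non-locally-free $E$ yields a stable $\hat E$) is not established by your proposal.
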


\begin{proof}
First, we consider the case in which $E\in M$ is $\mu$-stable vector bundle. 
The proof follows the argument of \cite{Y}, \S 3, but our case is much easier.  
As $E$ is $\mu$-stable, $E$ has no non-trivial morphism $E\to \mathcal P_y^{-1}$. 
Therefore, 
we have $\Hom (E\otimes \mathcal P_y,\mathcal O_A)=0$ for any $y\in \hat A$. 
Similarly, we have $\Ext ^2(E\otimes \mathcal P_y,\mathcal O_A)=0$ by $\mu$-stability 
and Serre duality. As Riemann-Roch infers that 
\[
\chi (E\otimes \mathcal P_y,\mathcal O_A)=\langle (2,0,-2),(1,0,0)\rangle =-2, 
\]
where $\langle \cdot ,\cdot \rangle$ stands for the Mukai pairing (see \cite{Y}, \S1, for 
example), we have $\dim \Ext ^1(E\otimes \mathcal P_y,\mathcal O_A)=2$ constantly 
in $y\in \hat A$. This shows that $H^i(\Phi ^D(E))=0$ for $i\neq 1$ and
$H^1(\Phi ^D(E))$ is a vector bundle of rank 2. 

If $\hat E=H^1(\Phi ^D(E))$ is not 
$\mu$-stable, we have  a sub-line bundle $N\hookrightarrow \hat E$ such that 
$\deg N= N\cdot H\geqslant 0$. Here, $N$ satisfies WIT for index 2. 
Noting that $H^2(\hat \Phi ^D(\hat E))=0$ by the spectral sequence \eqref{FMSS},  
we get $H^2(\hat \Phi ^D(N))=0$ 
from the long exact sequence of cohomology
\[
H^2(\hat\Phi ^D(\hat E))\to H^2(\hat \Phi ^D(N))\to 0, 
\]
which is a contradiction. Therefore, $\hat E$ must be $\mu$-stable. 

Next, we treat the case where $E$ is not locally free. As we saw in \S1, 
$E$ fits into the short exact sequence
\[
0\lto E\lto E^{**}\lto Q(E)\lto 0, 
\]
where $E^{**}$ and $Q(E)$ are realized as extensions
\begin{gather*}
0\lto L_1\lto E^{**}\lto L_2\lto 0 \quad (L_1,L_2\in \Pic^0(A)),\\
0\lto \mathcal O_{x_1}\lto Q(E)\lto \mathcal O_{x_2}\lto 0\quad (x_1,x_2\in A). 
\end{gather*}
As WIT of index 2 holds for $L_i$ and $\mathcal O_{x_i}$ with respect to 
$\Phi ^D$, $E^{**}$ and $Q(E)$ satisfy WIT for index 2, and 
$(E^{**})\sphat=H^2(\Phi ^D(E^{**}))$ and 
$Q(E)\sphat=H^2(\Phi ^D(Q(E)))$ are extensions of 
skyscraper sheaves and line bundles on $\hat A$, respectively (Lemma \ref{FMlemma} (i)). 
By the semistability of $E$ and Serre duality, again, 
$\Ext ^2 (E\otimes \mathcal P_y,\mathcal O_{A})=0$ for every $y\in \hat A$, so that 
$H^2(\Phi ^D(E))=0$. By the long exact sequence of cohomology, we get
\begin{gather*}
H^0(\Phi ^D(E))=0,\\
0\to H^1(\Phi ^D(E))\to H^2(\Phi ^D(Q(E)))\to
H^2(\Phi ^D(E^{**}))\to 0. 
\end{gather*}
Thus, $E$ satisfies WIT for index 1 and is a kernel of a surjective morphism 
$Q(E)\sphat \to (E^{**})\sphat$. 

Now we check the (semi)-stability of $\hat E=H^1(\Phi ^D(E))$. If $\hat E$ is not semistable, 
we have a torsion free sub-sheaf $N_1$ of $\hat E$ with $p_{N_1}> p_{\hat E}$, where
$p$'s are reduced Hilbert polynomials. 
Noting that $H^2(\Phi^D(Q(E)))$ is $\mu$-semistable 
as it is an extension of numerically trivial line bundles, 
$c_1(N_1)=0$ since $N_1$ injects to $H^2(\Phi ^D(Q(E)))$. 
If $\rank N_1=2$, $\displaystyle p_{\hat E}=p_{N_1}+\frac{\length (\hat E/N_1)}2$. 
Therefore, $N_1$ cannot be destabilizing. 
If $\rank N_1=1$, we have a short exact sequence
\[
0\lto N_1\lto \hat E\lto N_2\lto 0
\]
with $N_2$ torsion free of rank 1, $c_1(N_2)=0$, and $c_2(N_1)+c_2(N_2)=2$.
If $\hat E$ is not semistable and $N_1$ destabilizing, $c_2(N_1)$ must be 0, i.e. $N_1$ is 
a line bundle. On the other hand, we have the exact sequence
\[
0=H^2(\hat \Phi^D(\hat E))\lto H^2(\hat \Phi ^D(N_1))\lto 0, 
\]
where we used the spectral sequence \eqref{FMSS} and Lemma \ref{FMlemma} (i). 
This is a contradiction. Thus, $\hat E$ is always semistable. 

$E$ is strictly semistable if and only if $E$ is an extension of rank 1 torsion free sheaves with 
$c_1=0, c_2=1$. The argument above implies that $\hat E$ is strictly semistable if $E$ is, and vice versa. 
\end{proof}

\begin{proof}[Proof of Theorem \ref{FM}]
(i) and (iii) are immediate consequences of the proposition above. 
The commutativity of \eqref{FMBgamma} also follow from the proof of 
the proposition. The proof of (ii) is the same as in \cite{Y}, \S4. 
\end{proof}

\section{GIT description of $B_{\gamma}$}

In this section, we give a GIT description of the fiber $B_{\gamma}$ of 
the morphism $\varphi$ to the Donaldson-Uhlenbeck compactification. 
Let us begin with fixing our notations. 

\begin{definition}\label{GITmodel}
Let us identify $\gamma=([L],[p])\in (\hat A/\{\pm 1\}) \times (A/\{\pm 1\})$ with a pair of $0$-cycles 
\[
\gamma =(\gamma _{\hat A}=\sum _{[L]\in \hat A} n_L [L],
\gamma _A=\sum _{p\in A} n_p p)\in \Sym ^2(\hat A)\times \Sym ^2(A),
\]
We define sheaves of $\mathbb C$-vector spaces $\mathcal V_{\gamma _{\hat A}}$ and 
$\mathcal Q_{\gamma _A}$ of finite length on $\hat A$ and $A$, respectively, by the stalks 
\[
\mathcal V_{\gamma _{\hat A, [L]}} = \mathbb C^{n_L},\quad 
\mathcal Q_{\gamma _A, p}=\mathbb C^{n_p}
\]
where $\mathbb C^0=0$ by convention. Note that $\dim \Gamma (\mathcal V_{\gamma _{\hat A}})=
\dim \Gamma (\mathcal Q_{\gamma_A})=2$. Using the notation
\[
N(V)=\{(A_1,A_2)\in \mathfrak sl(V)^{\oplus 2}\mid [A_1,A_2]=O, \; A_1^{i_1}A_2^{i_2}=O 
\;(i_1+i_2=\dim V)\}, 
\]
we define an affine scheme $Y_{\gamma}$ by
\[
Y_{\gamma} = N(\mathcal V_{\gamma _{\hat A}})\times 
\Hom _{\mathbb C} (\Gamma(\mathcal V_{\gamma _{\hat A}}),\Gamma (\mathcal Q_{\gamma _A}))
\times N(\mathcal Q_{\gamma _A})
\]
and a group $G_{\gamma}$ by
\[
G_{\gamma} = \Aut (\mathcal V_{\gamma _{\hat A}})\times \Aut (\mathcal Q_{\gamma _A}).
\]
Note that $\Aut (\mathcal V_{\gamma _{\hat A}})$ 
(resp. $\Aut (\mathcal Q_{\gamma _A})$) acts on $N(\mathcal V_{\gamma _{\hat A}})$ 
(resp. $N (\mathcal Q_{\gamma _A})$) by adjoint. 
We can regard $G_\gamma$ as a subgroup of 
\[
GL (\Gamma (\mathcal V_{\gamma _{\hat A}}))
\times GL (\Gamma (\mathcal Q_{\gamma _A}))\cong GL(\mathbb C^2)\times GL(\mathbb C^2).
\] 
We define a character $\chi : GL (\Gamma (\mathcal V_{\gamma _{\hat A}}))\times 
GL (\Gamma (\mathcal Q_{\gamma _A}))\to \mathbb C^*$ by 
\[
\chi=(\det {}_{\Gamma(\mathcal V_{\gamma _{\hat A}})})^{-1}\cdot 
(\det {}_{\Gamma(\mathcal Q_{\gamma _A})})
\]
and define $\chi _{\gamma}: G_{\gamma}\to \mathbb C^*$ as the composition
\[
\chi _{\gamma}: 
G_{\gamma}\hookrightarrow GL (\Gamma (\mathcal V_{\gamma _{\hat A}}))\times 
GL (\Gamma (\mathcal Q_{\gamma _A}))\overset{\chi}{\to} \mathbb C^*.
\]
\end{definition}

The theoretic basis of the proof of Theorem \ref{strB} is the following theorem. 

\begin{theorem}\label{GITdescr}
Under the same notation as in Theorem \ref{strB}, we have an isomorphism
\[
B_{\gamma} \cong Y_{\gamma} \underset{\chi _{\gamma}}{\git} G_{\gamma}
=\Proj \left( \bigoplus _{n=0}^{\infty} A(Y_{\gamma})^{G_{\gamma}, \chi _{\gamma}^n} \right), 
\]
where $A(Y_{\gamma})$ is the affine coordinate ring of $Y_{\gamma}$ and 
$A(Y_{\gamma})^{G_{\gamma}, \chi _{\gamma}^n}$ is the vector space of $G_{\gamma}$-semi-invariants 
whose character is $\chi _{\gamma}^n$. 
\end{theorem}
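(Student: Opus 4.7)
The plan is to identify $B_\gamma$ with the moduli of triples $(F,\psi,Q)$, where $F$ is a rank-$2$ vector bundle on $A$ extending $L^{-1}$ by $L$ with $S$-equivalence class $\gamma_{\hat A}$, $Q$ is a length-$2$ artinian sheaf on $A$ with $0$-cycle $\gamma_A$, and $\psi:F\twoheadrightarrow Q$ is an $\mathcal O_A$-linear surjection. By Proposition \ref{doubledual} and \S\ref{descrvarphi}, every $E\in B_\gamma$ supplies such a triple via $F=E^{**}$, $Q=Q(E)$, and the canonical quotient $\psi$, and conversely $E=\ker\psi$ recovers a sheaf in $B_\gamma$; two triples produce isomorphic sheaves precisely when they differ by the $\Aut(F)\times\Aut(Q)$-action. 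The task is therefore to realize this moduli of triples as the prescribed GIT quotient.

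The linear-algebra dictionary comes from Lemma \ref{FMlemma}(i): $F\mapsto H^2(\Phi^D(F))$ bijects extensions of $L^{-1}$ by $L$ with artinian length-$2$ $\mathcal O_{\hat A}$-modules supported on $\gamma_{\hat A}$. Choosing a basis of $\Gamma(\mathcal V_{\gamma_{\hat A}})\cong\mathbb C^2$ and recording the actions of local uniformizers turns such a module into a point of $N(\mathcal V_{\gamma_{\hat A}})$, and $\Aut(\mathcal V_{\gamma_{\hat A}})$ acts by adjoint basis change so orbits parametrize extensions up to isomorphism. Symmetrically, $Q$ is parametrized by $N(\mathcal Q_{\gamma_A})/\Aut(\mathcal Q_{\gamma_A})$. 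Since $F$ is locally free and $Q$ a skyscraper, trivializing both sides reduces an $\mathcal O_A$-linear map $\psi:F\to Q$ to an element of $\Hom_{\mathbb C}(\Gamma(\mathcal V_{\gamma_{\hat A}}),\Gamma(\mathcal Q_{\gamma_A}))$. Thus $Y_\gamma$ parametrizes triples together with trivializations, and $G_\gamma$ acts precisely by changing them.

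It remains to match the polarization $\chi_\gamma=\det_V^{-1}\cdot\det_Q$ to the condition that $\psi$ be surjective. A sub-representation of the quiver-like data consists of an $A_V$-invariant $V'\subseteq\Gamma(\mathcal V_{\gamma_{\hat A}})$ and an $A_Q$-invariant $Q'\subseteq\Gamma(\mathcal Q_{\gamma_A})$ with $\psi(V')\subseteq Q'$, having $\chi_\gamma$-weight $-\dim V'+\dim Q'$, so by Hilbert--Mumford a point is $\chi_\gamma$-semistable iff $\dim Q'\geq\dim V'$ for every such pair. Taking $V'=\Gamma(\mathcal V_{\gamma_{\hat A}})$ forces the $A_Q$-submodule generated by $\psi(V)$ to fill $\Gamma(\mathcal Q_{\gamma_A})$, i.e.\ $\psi$ is surjective as an $\mathcal O_A$-map; the $\dim V'=1$ case rules out the configurations for which $\ker\psi$ would acquire a destabilizing line subbundle in $\Pic^0(A)$ and hence fail to lie in $B_\gamma$. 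Closed $G_\gamma$-orbits in the semistable locus correspond to polystable triples, which realize the $S$-equivalence classes of sheaves $E=\ker\psi$ in $B_\gamma$; constructing a $G_\gamma$-equivariant family over $Y_\gamma^{ss}\times A$ and passing to the quotient promotes this set-theoretic bijection into the scheme isomorphism $Y_\gamma\git_{\chi_\gamma}G_\gamma\cong B_\gamma$.

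The main obstacle is this last step: the detailed King-style verification matching $\chi_\gamma$-semistability with surjectivity of $\psi$, and the check that closed orbits reproduce the $S$-equivalence relation on $B_\gamma$ in each of the three cases $(i),(ii),(iii)$ of Theorem \ref{strB}, where the shapes of $N(\mathcal V_{\gamma_{\hat A}})$ and $N(\mathcal Q_{\gamma_A})$ depend on whether the support of $\gamma$ is reduced. The rest is essentially a repackaging of Lemma \ref{FMlemma} with the description of $\varphi$ given in \S\ref{descrvarphi}, and follows the pattern of the parallel construction in \cite{N}.
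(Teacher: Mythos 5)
Your overall route is the same as the paper's: identify a point of $B_\gamma$ with the datum $(F=E^{**},\ \overline\Psi\colon F\to Q(E))$, use Lemma \ref{FMlemma}(i) (packaged in the paper as Lemma \ref{extension}) to encode the extension $F$ and the artinian quotient $Q(E)$ by the nilpotent data in $N(\mathcal V_{\gamma_{\hat A}})$ and $N(\mathcal Q_{\gamma_A})$ and the map by an element of $\Hom _{\mathbb C}(\Gamma(\mathcal V_{\gamma_{\hat A}}),\Gamma(\mathcal Q_{\gamma _A}))$, and then match semistability of $\Ker\overline\Psi$ with $(G_\gamma,\chi_\gamma)$-semistability by Hilbert--Mumford; this is exactly the content of the unnumbered Proposition in \S 3, whose sheaf-theoretic half (your ``no destabilizing line subbundle in $\Pic^0(A)$'') is \cite{OG6}, Lemma 2.1.2, and whose GIT half is carried out as in \cite{N}. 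The role you assign to the equivariant family is what Remark \ref{lepotier} makes precise: $\chi_\gamma$ is the character of Le Potier's determinant bundle $\lambda_{[\mathcal E]}([\mathcal O_A]+[\mathcal O_q])$, the relative polarization of $M\to M^{DU}$, which is why this particular linearization computes the fiber of $\varphi$.

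There is, however, a concrete error in the criterion you state as the crux. The King-style test over \emph{all} invariant subspaces $(V',Q')$ is legitimate only when $G_\gamma=GL(V)\times GL(Q)$, i.e.\ when both $L$ and $p$ are $2$-torsion. In the other cases $G_\gamma=\Aut(\mathcal V_{\gamma_{\hat A}})\times\Aut(\mathcal Q_{\gamma_A})$ contains only the diagonal torus on the split factor(s), and its one-parameter subgroups detect only coordinate subspaces, i.e.\ subsheaves of $\mathcal V_{\gamma_{\hat A}}$ resp.\ $\mathcal Q_{\gamma_A}$. Taken literally, your criterion fails already in case (i): for $\Psi=\begin{pmatrix}1&1\\1&1\end{pmatrix}$ the pair $V'=\langle e_1-e_2\rangle$, $Q'=0$ violates $\dim Q'\geq\dim V'$, so you would declare it unstable (your test would force $\psi$ to be injective), whereas this point is $\chi_\gamma$-semistable ($\xi_1=z_{11}z_{22}\neq0$) and $\Ker\overline\Psi$ is a semistable sheaf; with your version the quotient in case (i) would not come out as $\mathbb P^1$. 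The fix is to quantify only over pairs of subsheaves stable under the nilpotent data; these correspond exactly to the numerically trivial sub-line bundles $M\hookrightarrow F$ (Remark \ref{extension-rem}) and to $\mathcal O_A$-submodules of $Q(E)$, and the corrected criterion is precisely the paper's intermediate condition that $\overline\Psi$ be surjective and $\dim\overline\Psi(M)\geqslant 1$ (resp.\ $>1$) for every such $M$. With that repair, and with the stable/polystable bookkeeping you defer, your plan coincides with the paper's proof.
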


To prove the theorem, we have to establish a relationship between the points in $B_{\gamma}$ 
and points in $Y_{\gamma}$. 
For that purpose, we need the following 

\begin{lemma}\label{extension}
Notation as above. $N(\mathcal Q_{\gamma _A})$ parametrizes the artinian 
$\mathcal O_A$-module structures on $\mathcal Q_{\gamma _A}$
up to the conjugation of $\Aut (\mathcal Q_{\gamma _A})$. Similarly, 
$N(\mathcal V_{\gamma _{\hat A}})$ parametrizes the (possibly trivial) extension 
data 
\[
0\lto L \lto F\lto L^{-1}\lto 0
\]
up to the conjugation of $\Aut (\mathcal V_{\gamma _{\hat A}})$. 
\end{lemma}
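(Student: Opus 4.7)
The plan is to prove the first half directly using the abelian-surface structure of $A$, and then deduce the second half from the first via the Fourier-Mukai correspondence of Lemma \ref{FMlemma}(i).

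For the first half, I would fix a basis $x, y$ of the cotangent space $T_0^* A$ at the origin; by translation this produces canonical local parameters $x_p, y_p \in \mathfrak m_p$ at every $p \in A$. Given an artinian $\mathcal O_A$-module structure on the sheaf $\mathcal Q_{\gamma_A}$, let $A_1, A_2 \in \End(\mathcal Q_{\gamma_A})$ be the sheaf endomorphisms obtained stalkwise as multiplication by $x_p$ and $y_p$. Nilpotency on each stalk places $A_1, A_2$ in $\mathfrak{sl}(\mathcal Q_{\gamma_A})$, commutativity of $\mathcal O_A$ gives $[A_1, A_2] = 0$, and the fact that each stalk has length at most $2$ and is therefore annihilated by $\mathfrak m_p^2$ yields $A_1^{i_1} A_2^{i_2} = 0$ for $i_1 + i_2 = 2$. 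Conversely, $(A_1, A_2) \in N(\mathcal Q_{\gamma_A})$ determines a module structure in which the canonical parameters act through $A_1, A_2$; this is well-defined because a length-$\le 2$ module over $\mathcal O_{A,p}$ is annihilated by $\mathfrak m_p^2$, and $\mathcal O_{A,p}/\mathfrak m_p^2$ is generated over $\mathbb C$ by $1, x_p, y_p$. Two structures define isomorphic modules on the same underlying sheaf exactly when they differ by a sheaf automorphism of $\mathcal Q_{\gamma_A}$, which is the adjoint action of $\Aut(\mathcal Q_{\gamma_A})$ on the pair $(A_1, A_2)$.

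For the second half, the identical argument applied on $\hat A$ to the sheaf $\mathcal V_{\gamma_{\hat A}}$ shows that $N(\mathcal V_{\gamma_{\hat A}})/\Aut(\mathcal V_{\gamma_{\hat A}})$ parametrizes isomorphism classes of artinian $\mathcal O_{\hat A}$-module structures on $\mathcal V_{\gamma_{\hat A}}$. By Lemma \ref{FMlemma}(i), the functor $\hat \Phi^D$ puts such length-$2$ modules on $\hat A$ in bijection with isomorphism classes of extensions $0 \to L_1 \to F \to L_2 \to 0$ of elements of $\Pic^0(A)$; the prescribed support $\gamma_{\hat A} = [L] + [L^{-1}]$, combined with Proposition \ref{SS-LB} identifying points of $\hat A$ with numerically trivial line bundles on $A$, then forces $\{L_1, L_2\} = \{L, L^{-1}\}$, which is precisely the extension data claimed by the lemma.

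The only substantive technical point is the assertion that the pair $(A_1, A_2)$ captures the full $\mathcal O_A$-module structure stalkwise; this rests on the elementary fact that length-$\le 2$ local modules are annihilated by $\mathfrak m_p^2$. The rest of the argument is essentially formal, with the nontrivial work for the second half already carried out in Lemma \ref{FMlemma}(i).
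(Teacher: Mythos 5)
Your proposal is correct and takes essentially the same route as the paper: the first assertion is the elementary stalkwise identification (which the paper simply declares clear and you spell out via translated local parameters and $\mathfrak m_p^2$-annihilation), and the second assertion is obtained by applying that identification on $\hat A$, where $\mathcal V_{\gamma_{\hat A}}$ is identified with the skyscrapers $H^2(\Phi^D(L))\oplus H^2(\Phi^D(L^{-1}))$, and invoking Lemma \ref{FMlemma}(i) — exactly the paper's argument.
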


\begin{proof}
The former assertion is clear. 
The latter is just a consequence of Lemma \ref{FMlemma} (i): 
we can write $H^2(\Phi ^D(L))=\mathcal O_y$ for some $y\in \hat A$. 
Artinian $\mathcal O_{\hat A}$-module structure on $\mathcal O_y\oplus \mathcal O_{-y}$ 
is parametrized by $N(\mathcal O_y\oplus \mathcal O_{-y})$ 
up to the conjugation by $\Aut (\mathcal O_y\oplus \mathcal O_{-y})$, 
where we naturally identify $\mathcal V_{\gamma _{\hat A}}$ with 
$\mathcal O_y\oplus \mathcal O_{-y}$. 
\end{proof}

\begin{remark}\label{extension-rem}
Let 
\[
0\lto L_1\lto F\lto L_2\lto 0
\]
be a non-trivial extension with $L_1=L_2=L\in \Pic ^0(A)$. Applying 
$H^2(\Phi ^D(-))$, we get 
\[
0\lto \mathcal O_{y_2}\lto \mathcal O_Z\lto \mathcal O_{y_1}\lto 0, 
\]
where $\mathcal O_{y_i}=H^2(\Phi ^D(L_i))=\mathcal O_y$. $Z$ is 
a length $2$ subscheme on $\hat A$ concentrated at $y$. 
If we identify $\mathcal O_Z$ with $\mathcal V_{\gamma _{\hat A}}$, 
one has $(B_1,B_2)\in N(\mathcal V_{\gamma _{\hat A}})$ corresponding to 
the scheme structure on $Z$. 
The one dimensional subspace of $\mathcal V_{\gamma _{\hat A}}$ that is annihilated by 
$B_1$ and $B_2$ corresponds to the sheaf $\mathcal O_{y_1}$, 
and accordingly to the only numerically trivial 
sub-line bundle $L_1\hookrightarrow F$. 
\end{remark}

\subsection{}\label{pullbackext}
Let us take $[E]\in B_{\gamma}$ with $\gamma=([L],[p])$. 
Then, $E$ fits into a short exact sequence
\[
0\lto E\lto F=E^{**}\overset{\overline{\Psi}}{\lto} Q(E)\lto 0. 
\]
Obviously $Q(E)\cong \mathcal Q_{\gamma _A}$ as sheaf of $\mathbb C$-vector spaces. 
Let $\iota : \Supp (Q(E))\to A$ be the inclusion. Then, 
$\overline{\Psi}$ is in one to one correspondence with 
\[
\psi : \iota ^{-1}(F) \to Q(E),
\]
which corresponds furthermore to an element 
\[
\psi \in \Hom (\Gamma(\mathcal V_{\gamma _{\hat A}}),\Gamma(\mathcal Q_{\gamma _A}))
\] 
up to a choice of isomorphisms 
$\iota ^{-1}(F)\cong \iota ^{-1}(\Gamma(\mathcal V_{\gamma _{\hat A}})\otimes \mathcal O_A)$  
and $Q(E)\cong \mathcal Q_{\gamma _A}$. 

But $\psi$ disregards the $\mathcal O_A$-module structure on $\mathcal Q_{\gamma _A}$ 
and the extension data
\[
0\lto L\lto F\lto L^{-1}\lto 0. 
\]
The former is described by $N(\mathcal Q_{\gamma _A})$ and the latter is also described by 
$N(\mathcal V_{\gamma _{\hat A}})$, according to Lemma \ref{extension}. 
Therefore, the morphism $\overline {\Psi}$ corresponds to 
an element $\Psi \in Y_{\gamma}$ up to the difference of $G_{\gamma}$-action. 

Now, Theorem \ref{GITdescr} is a direct consequence of the following 

\begin{proposition}
Let $\Psi\in Y_{\gamma}$ and consider the corresponding morphism of $\mathcal O_A$-modules 
$\overline \Psi:F\to \mathcal Q_{\gamma _A}$ as above. Then, the following are equivalent:
\begin{enumerate}[(i)]
\item $\overline \Psi$ is surjective and $E=\Ker \overline \Psi$ is semistable (resp. stable). 
\item $\overline \Psi$ is surjective and for every sub-line bundle $M\hookrightarrow F$ with 
$\mu (M)=\mu (F)$, $\dim (\overline \Psi(M))\geqslant 1$ (resp. $>1$). 
\item $\Psi$ is a $(G_{\gamma}, \chi)$-semistable (resp. stable) point. 
\end{enumerate}
\end{proposition}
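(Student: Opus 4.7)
The plan is to prove (i)$\Leftrightarrow$(ii) by a direct Hilbert-polynomial comparison and then (ii)$\Leftrightarrow$(iii) via the Hilbert--Mumford criterion applied to the $(G_\gamma,\chi_\gamma)$-action on $Y_\gamma$.

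For (i)$\Leftrightarrow$(ii), assume $\overline\Psi$ is surjective, so that $E=\ker\overline\Psi$ is a rank-$2$ torsion-free sheaf with $c_1(E)=0$ and $c_2(E)=2$, and reduced Hilbert polynomial $p_E(n)=\tfrac12 H^2 n^2-1$. Since $F$ is an extension of degree-$0$ line bundles, it is $\mu$-semistable with $\mu(F)=0$, so every rank-$1$ torsion-free subsheaf $N\subseteq E$ satisfies $\mu(N)\leq 0$, with equality required for $N$ to (weakly) destabilize $E$. Taking the saturation $M$ of such an $N$ inside $F$ yields a sub-line bundle $M\subseteq F$ with $\mu(M)=0$, and the induced inclusion $M/N\hookrightarrow F/E=Q(E)$ identifies $M/N$ with $\overline\Psi(M)$. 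Writing $N=I_Z\otimes M$ with $\length(Z)=k:=\dim \overline\Psi(M)$, we obtain $p_N(n)=\tfrac12 H^2 n^2-k$, so $p_N\geq p_E$ (resp.\ $p_N>p_E$) iff $k\leq 1$ (resp.\ $k=0$). Since conversely every slope-$0$ sub-line bundle $M\subseteq F$ arises this way from $N=\ker(\overline\Psi|_M)\subseteq E$, the equivalence (i)$\Leftrightarrow$(ii) follows.

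For (ii)$\Leftrightarrow$(iii), Lemma \ref{extension} and Remark \ref{extension-rem} identify slope-$0$ sub-line bundles $M\hookrightarrow F$ with $(B_1,B_2)$-invariant $1$-dimensional subspaces $W\subseteq V:=\Gamma(\mathcal V_{\gamma _{\hat A}})$, and one checks directly that $\overline\Psi(M)$ corresponds to the smallest $(A_1,A_2)$-invariant subspace of $Q:=\Gamma(\mathcal Q_{\gamma _A})$ containing $\psi(W)$, namely $\langle \psi(W),A_1\psi(W),A_2\psi(W)\rangle$. Applied to a 1-PS $\lambda=(\lambda_V,\lambda_Q)$ of $G_\gamma$, diagonalized so that $\lambda_V$ has weights $v_1\geq v_2$ on $V$ and $\lambda_Q$ has weights $q_1\geq q_2$ on $Q$, the Hilbert--Mumford criterion says that $\lim_{t\to 0}\lambda(t)\cdot\Psi$ exists iff the $B_i$ preserve the flag $V\supseteq V_{v_1}$, the $A_i$ preserve $Q\supseteq Q_{q_1}$, and each component $\psi_{ij}$ vanishes whenever $q_i<v_j$; the character pairing is $\langle\chi_\gamma,\lambda\rangle=(q_1+q_2)-(v_1+v_2)$.

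If (ii) fails with some $W$ having $\psi(W)=0$, then the 1-PS with $\lambda_V=\diag(t,1)$ on the decomposition $V=W\oplus W'$ and $\lambda_Q=I$ has an existing limit (since $\psi|_W=0$) and pairing $-1<0$, destabilizing $\Psi$; the variant in which $\lambda_Q$ is chosen non-trivially on an $(A_1,A_2)$-invariant line of $Q$ containing $\psi(W)$ handles the strict-stability case. Conversely, from a destabilizing $\lambda$ one extracts the $B$-invariant top-weight line $V_{v_1}$, and combining the vanishings $\psi_{ij}=0$ for $q_i<v_j$ with the inequality $(q_1+q_2)<(v_1+v_2)$ shows that $\dim\overline\Psi(V_{v_1})$ falls short of the bound in (ii). The main obstacle is the case analysis here, since $G_\gamma$ varies among $GL_2\times GL_2$, $GL_2\times T$, $T\times GL_2$, and $T\times T$ depending on whether the $0$-cycles $\gamma_{\hat A}$ and $\gamma_A$ are concentrated at single points; however, after diagonalization only the weights $(v_i,q_j)$ enter the pairing and the flag conditions, so the argument proceeds uniformly across all cases.
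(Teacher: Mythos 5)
Your overall strategy is the one the paper intends: the paper disposes of (i)$\Leftrightarrow$(ii) by citing O'Grady's Lemma 2.1.2 and of (ii)$\Leftrightarrow$(iii) by an appeal to the Hilbert--Mumford criterion as in Proposition 2.3 of \cite{N}, and your Hilbert polynomial computation for (i)$\Leftrightarrow$(ii) is correct and self-contained. However, your treatment of (ii)$\Leftrightarrow$(iii) has a genuine gap: the clause ``$\overline\Psi$ is surjective'' in (ii) is never matched with GIT (semi)stability. In the direction $\neg$(ii)$\Rightarrow\neg$(iii) you only treat the failure of the sub-line-bundle bound (some $W$ with $\psi(W)=0$); if instead $\overline\Psi$ fails to be surjective, you must still produce a destabilizing one-parameter subgroup, and it has a different shape: the $\mathcal O_A$-submodule of $\Gamma(\mathcal Q_{\gamma_A})$ generated by the image of $\psi$ is then contained in an $(A_1,A_2)$-invariant line $Q'$, and the 1-PS acting trivially on $V$ and with weights $(0,-1)$ on $Q'\oplus Q''$ has an existing limit and pairing $-1<0$. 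Dually, in your converse direction the extraction of ``the $B$-invariant top-weight line $V_{v_1}$'' silently assumes $v_1>v_2$; a destabilizing 1-PS may have $v_1=v_2$, in which case there is no such line and the correct conclusion is that $\overline\Psi$ is not surjective (the inequality forces $q_2<v$, so the image of $\psi$ lies in the $A$-invariant subspace $Q_{q_1}$), not that the bound in (ii) is violated. Even when $v_1>v_2$, the subcase $q_1\geqslant v_1>q_2$ (which then forces $q_2<v_2$) again yields non-surjectivity rather than a small $\dim\overline\Psi(V_{v_1})$, so your closing claim is inaccurate as stated. All of this is repairable by a short extra case analysis inside the same framework, but as written the equivalence is only established for points with $\overline\Psi$ surjective.

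A secondary imprecision: the identification ``slope-$0$ sub-line bundles of $F$ $\leftrightarrow$ $(B_1,B_2)$-invariant lines of $V$'' is literally correct only when $\gamma_{\hat A}$ is concentrated at a single point. When $L$ is not 2-torsion one has $B_1=B_2=0$, and only the two lines coming from the stalk decomposition of $\mathcal V_{\gamma_{\hat A}}$ (i.e.\ subsheaves, not arbitrary subspaces of $\Gamma(\mathcal V_{\gamma_{\hat A}})$) correspond to sub-line bundles of $F=L\oplus L^{-1}$. This turns out to be harmless because $\Aut(\mathcal V_{\gamma_{\hat A}})$ is then a torus, so the weight lines of any 1-PS are automatically stalk lines, but the point should be made explicit since it is exactly where the sheaf structure, and not just the vector space $\Gamma(\mathcal V_{\gamma_{\hat A}})$, enters.
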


\begin{proof}
The equivalence of (i) and (ii) is a consequence of Lemma 2.1.2 of \cite{OG6} 
(see also \cite{OG10}, Lemma 1.1.5).  The equivalence of (ii) and (iii) is 
an easy and classical application of Hilbert-Mumford's numerical criterion, 
and goes exactly in the same way as the proof 
of Proposition 2.3 of \cite{N}. The details are left to the reader. 
\end{proof}

\begin{remark}\label{lepotier}
Let $\mathcal F$ be the universal extension on $Y_{\gamma}\times A$ and
\[
\underline{\Psi} : \mathcal F\to \mbox{pr}_A^* \mathcal Q_{\gamma _A}
\]
the universal homomorphism. 
Let $[\mathcal E]=[\mbox{pr}_A^* \mathcal Q_{\gamma _A}]-[\mathcal F] \in K(Y_{\gamma}\times A)$ 
be the universal kernel of $\underline{\Psi}$ in the $K$-group. Le Potier's morphism
$\lambda _{[\mathcal E]}: K(A)\to \Pic(Y_{\gamma})$ is defined by
\[
\lambda _{[\mathcal E]}(\alpha) = \det ((\mbox{pr}_{Y_{\gamma}})_! ([\mathcal E]\otimes (\mbox{pr}_A)^*(\alpha)). 
\]
The character $\chi _{\gamma}$ is nothing but the character of the line bundle 
$\lambda _{[\mathcal E]}([\mathcal O_A]+[\mathcal O_q])$, where $q$ is a point on $A$. 
This determinant line bundle gives the relatively ample divisor for $M\to M^{DU}$ 
(see \cite{P}, \S 7, see also \cite{H-L},  Chap. 8), therefore, 
$\chi _{\gamma}$ is the only natural choice of polarization 
to describe $B_{\gamma}$ as a GIT quotient of $Y_{\gamma}$. 
\end{remark}

\section{Calculation of the invariant rings}

In this section, we actually calculate the homogeneous invariant ring
\[
\mathscr R_{\gamma}
=\bigoplus _{n=0}^{\infty} A(Y_{\gamma})^{G_{\gamma}, \chi _{\gamma}^n}
\]
appeared in Theorem \ref{GITdescr} and complete the proof of Theorem \ref{strB}. 
The method is completely the same as in \cite{N}, \S 3. 
The calculation itself is also quite parallel to the calculation of \emph{op. cit.}, especially 
\S 3.2 and \S 3.3. 
The reader will find a little bit more detailed explanation there. 

\subsection{}
First, we consider the case in which neither $L$ nor $p$ is 2-torsion 
for $\gamma =([L], [p])$. According to Definition \ref{GITmodel} and Theorem 
\ref{GITdescr}, $B_{\gamma}\cong Y_{\gamma} \underset{\chi _{\gamma}}{\git} G_{\gamma}$ 
for 
\[
\begin{aligned}
Y_{\gamma}& = \Hom (\mathbb C^2,\mathbb C^2),\\
G_{\gamma} &=\mathbb C^*\times \mathbb C^*\times \mathbb C^*\times \mathbb C^* ,\\
\chi _{\gamma}
&= \frac1{\id_{\mathbb C^*}}\cdot \frac1{\id_{\mathbb C^*}}\cdot \id_{\mathbb C^*}\cdot \id_{\mathbb C^*}. 
\end{aligned}
\]
We write $\Psi\in Y_{\gamma}$ as
\[
\Psi =\begin{pmatrix} z_{11} & z_{12} \\ z_{21} & z_{22}\end{pmatrix}. 
\]
$G_{\gamma}$ acts on $Y _{\gamma}$ by
\[
g\Psi = 
\begin{pmatrix}
t_1^{-1}s_1z_{11} & t_2^{-1}s_1z_{12}\\
t_1^{-1}s_2z_{21} & t_2^{-1}s_2z_{22}
\end{pmatrix}
\qquad (g=(t_1,t_2,s_1,s_2)\in G_{\gamma}). 
\]
It is immediate to see that the ring $\mathscr R_{\gamma}$ 
of $(G_{\gamma},\chi _{\gamma})$-semi-invariants
is the polynomial ring generated by 
\[
\xi _1=z_{11}z_{22},\quad \xi _2=z_{12}z_{21}. 
\]
This means that $B_{\gamma}=\Proj \mathscr R_{\gamma}=\mathbb P^1$. 
$E=\Ker (\overline\Psi : L\oplus L^{-1}\to \mathcal Q_{\gamma _A})$ 
is strictly semistable if and only if one of the entries of $\Psi$ vanishes, i.e., 
$\xi_1\xi_2=0$. This shows that $(B_{\gamma}\cap \Sigma)_{red}$ consists 
of two points. 

\subsection{}\label{2torsionp}
Let us assume $p$ is 2-torsion, but $L$ is not, i.e., $p=-p$ and $L\not\cong L^{-1}$. 
Then, our GIT setting is given by
\[
\begin{aligned}
Y_{\gamma} &= \Hom (\mathbb C^2,Q)\times N(Q)\qquad (Q=\mathbb C^2), \\
G_{\gamma} &= \mathbb C^*\times \mathbb C^*\times GL(Q), \\
\chi _{\gamma} &=\frac1{\id _{\mathbb C^*}}\cdot \frac1{\id _{\mathbb C^*}}\cdot {\det }_Q.
\end{aligned}
\]

The generating set of the ring of $SL(Q)$-invariants is given by the 
\emph{symbolic method} of classical invariant theory (see, for example, \cite{PV}, Theorem 9.3). 
Writing $\Psi\in Y_{\gamma}$ by coordinates as
\begin{multline*}
\Psi = ( (v_1=\begin{pmatrix} z_{11} \\ z_{21}\end{pmatrix}, 
v_2=\begin{pmatrix} z_{21} \\ z_{22}\end{pmatrix}) ; (A_1,A_2))
\in \Hom (\mathbb C^2, Q)\times N(Q)\\\cong (Q\oplus Q)\times N(Q), 
\end{multline*}
the invariant ring $A(Y_{\gamma})^{SL(Q)}$ is generated by
\[
\begin{aligned}
\xi _0 &= \det (v_1 \mid v_2)\\
\xi _1 &=\det (A_1v_1\mid v_1), &\xi _2&=\det (A_2v_1\mid v_1),\\
\xi _3 &=\det (A_1v_1\mid v_2), &\xi _4&=\det (A_2v_1\mid v_2),\\
\xi _5 &=\det (A_1v_2\mid v_2), &\xi _6&=\det (A_2v_2\mid v_2), 
\end{aligned}
\]
taking the fact into account that $A_1$ and $A_2$ commute each other 
and satisfy the relation $A_1^2=A_1A_2=A_2^2=0$ (see, \cite{N} \S 3.3). 
A Gr\"obner basis calculation using a computer algebra system (the author 
relies on {\sc Singular} \cite{S}) shows that the relations between these $\xi$'s 
are generated by
\begin{gather*}
\xi _1\xi_4-\xi_2\xi_3,\quad \xi_3\xi_6-\xi_4\xi_5,\\
\xi_1\xi_6-\xi_3\xi_4,\quad \xi_1\xi_6-\xi_2\xi_5,\\
\xi_3^2-\xi_1\xi_5,\quad \xi_4^2-\xi_2\xi_6.
\end{gather*}
Now we check the weights of $\xi$'s with respect to the characters, which are 
given in the following table,
\begin{center}
\begin{tabular}{c||c|c|c}
 & $\mathbb C^*\; (v_1)$ & $\mathbb C^*\; (v_2)$ & $\det {}_Q$ \\ \hline
 $\xi _0$ & -1 & -1 & 1\\
 $\xi _1$ & -2 & 0 & 1\\
 $\xi _2$ & -2 & 0 & 1\\
 $\xi _3$ & -1 & -1 & 1\\
 $\xi _4$ & -1 & -1 & 1\\
 $\xi _5$ & 0 & -2 & 1\\
 $\xi _6$ & 0 & -2 & 1\\\hline
 $\chi_{\gamma}$ & -1 & -1 & 1 
\end{tabular}
\end{center}
Therefore, the homogeneous $(G_{\gamma},\chi _{\gamma})$-invariant ring 
$\mathscr R_{\gamma}$ is generated by the $\chi_{\gamma}$-degree 1 invariants
$\xi_0, \xi_3, \xi_4$ and the $\chi_{\gamma}$-degree 2 invariants
\[
\xi_1\xi_5,\; \xi_1\xi_6,\; \xi_2\xi_5,\; \xi_2\xi_6. 
\]
But looking at the relations given before, 
these degree 2 invariants can be written as a polynomial of $\xi_3$ and $\xi_4$. 
This means that $\mathscr R_{\gamma}=\mathbb C[\xi_0,\xi_3,\xi_4]$, so that 
$B_{\gamma}\cong \mathbb P^2$. 

Taking an appropriate coordinate of $Q$, namely, replacing $\Psi$ by 
another appropriate point in the $G_{\gamma}$-orbit, 
we may assume
\[
A_1=\begin{pmatrix} 0&0 \\ a_1&0\end{pmatrix},\quad 
A_2=\begin{pmatrix} 0&0 \\ a_2&0\end{pmatrix}.  
\]
Then, $\xi_0, \xi_3, \xi_4$ are written as
\[
\xi_0=z_{11}z_{22}-z_{21}z_{12},\quad 
\xi _3=-a_1z_{11}z_{12},\quad 
\xi _4=-a_2z_{11}z_{12}. 
\]
In this coordinate of $Q$, $E=\Ker (\overline\Psi : L\oplus L^{-1}\to \mathcal Q_{\gamma _A})$ 
is strictly semistable if and only if $a_1=a_2=0$ or $z_{11}z_{12}=0$, i.e., 
$\xi_3=\xi_4=0$. Thus, $(B_{\gamma}\cap \Sigma)_{red}$ is one point set. 

\subsection{}
Let us assume $L$ is 2-torsion, but $p$ is not. Then, 
\[
\begin{aligned}
Y_{\gamma} &=N(V)\times \Hom(V, \mathbb C^2)\qquad (V\cong \mathbb C^2),\\
G_\gamma &=GL(V)\times \mathbb C^*\times \mathbb C^*, \\
\chi _{\gamma} &= ({\det}_V)^{-1}\cdot \id_{\mathbb C^2}\cdot \id_{\mathbb C^2}. 
\end{aligned}
\]
The situation is exactly ``dual'' to the situation in \S \ref{2torsionp}. 
Therefore, the calculation of the invariant ring goes in 
completely the same way (except that we have to transpose every matrix appeared) 
and we conclude that $B_{\gamma}\cong \mathbb P^2$, 
also in this case. Or, by Theorem \ref{FM}, this case can be simply reduced  
to \S \ref{2torsionp}. 

\subsection{}
Finally, we consider the case where both of $L$ and $p$ are 2-torsion. $Y_{\gamma}$, 
$G_{\gamma}$, and $\chi _{\gamma}$ corresponding to $\gamma = ([L],[p])$ are given by
\[
\begin{aligned}
Y_{\gamma} &= N(V)\times \Hom (V, Q)\times N(Q)\qquad (V=\mathbb C^2, \; Q=\mathbb C^2),\\
G_{\gamma} &= GL(V)\times GL(Q),\\
\chi _{\gamma} &= (\det {}_{V})^{-1}\cdot (\det {}_{Q}). 
\end{aligned}
\]
We write 
\begin{multline*}
\Psi = ((B_1,B_2), (v_1=\begin{pmatrix} z_{11} \\ z_{21}\end{pmatrix}, 
v_2=\begin{pmatrix} z_{12} \\ z_{22} \end{pmatrix} ), (A_1,A_2) )\\
\in N(V)\times \Hom (V,Q)\times N(Q). 
\end{multline*}
As before, the ring of $SL(Q)$-invariants is generated by 
\[
\begin{aligned}
\xi _0 &= \det (v_1 \mid v_2)\\
\xi _1 &=\det (A_1v_1\mid v_1), &\xi _2&=\det (A_2v_1\mid v_1),\\
\xi _3 &=\det (A_1v_1\mid v_2), &\xi _4&=\det (A_2v_1\mid v_2),\\
\xi _5 &=\det (A_1v_2\mid v_2), &\xi _6&=\det (A_2v_2\mid v_2), 
\end{aligned}
\]
plus the entries of $B_1$ and $B_2$. Note that $\xi _0$ is also a $SL(V)$-invariant. 
$GL(V)$ acts on $\xi_1 ,\cdots , \xi _6$ through the adjoint action on 
\[
X_1=\begin{pmatrix} 
\xi _3 & \xi _5 \\ 
-\xi _1 & -\xi _3
\end{pmatrix},\quad 
X_2=\begin{pmatrix}
\xi_4 & \xi _6\\
-\xi _2 & -\xi _4
\end{pmatrix} .
\]
The symbolic method tells us that the ring of $SL(V)$-invariants with respect to 
$B_1, B_2$ and $\xi_i$'s are given by 
\[
\begin{aligned}
\zeta _0&=\xi _0,\\
\zeta _1&=\tr (B_1X_1), & \zeta_2&=\tr (B_2X_1),\\
\zeta _3&=\tr (B_1X_2), & \zeta_4&=\tr (B_2X_2),
\end{aligned}
\]
subject to the only relation $\zeta_1\zeta_4-\zeta_2 \zeta_3=0$ (see \cite{N}, \S 3.3). 
The weights for $\zeta _i$'s are all the same as $\chi _{\gamma}$. Therefore, 
\[
\mathscr R_{\gamma}=\mathbb C[\zeta _0, \cdots ,\zeta _4] / (\zeta_1\zeta_4-\zeta_2 \zeta_3), 
\]
which means that $B_{\gamma}=\Proj \mathscr R_{\gamma}$ is a cone over a quadric surface in 
$\mathbb P^4$. 

We pass to a point $\Psi$ with
\[
A_1=\begin{pmatrix} 0&0 \\ a_1&0\end{pmatrix},\quad 
A_2=\begin{pmatrix} 0&0 \\ a_2&0\end{pmatrix},\quad  
B_1=\begin{pmatrix} 0&b_1 \\ 0&0\end{pmatrix},\quad 
B_2=\begin{pmatrix} 0&b_2 \\ 0&0\end{pmatrix},  
\]
by $G_{\gamma}$-action. On such a point, $\zeta _i$'s are written as
\[
\begin{aligned}
\zeta_0 & = z_{11}z_{22}-z_{21}z_{12}, \\
\zeta _1 &= a_1b_1 z_{11},\quad
\zeta _2 = a_1b_2 z_{11},\quad
\zeta _3 =a_2 b_1 z_{11},\quad 
\zeta _4 = a_2 b_2 z_{11}.
\end{aligned}
\]
Noting that 
$\begin{pmatrix} 1 \\ 0\end{pmatrix} \in \Gamma (\mathcal V_{\gamma _{\hat A}})$
corresponds to the only numerically trivial sub-line bundle $L\hookrightarrow F$ 
if $F$ is non-splitting (Remark \ref{extension-rem}), 
it is immediate to see that $E=\Ker \overline \Psi$ is strictly semistable 
if and only if $a_1=a_2=0$, or $b_1=b_2=0$, or $z_{11}=0$. This implies that 
$(B_{\gamma}\cap \Sigma)_{red}$ is defined by $\zeta_1=\zeta _2=\zeta _3=\zeta _4=0$, 
namely the vertex of the cone $B_{\gamma}$. 

This completes the proof of Theorem \ref{strB}. 

\section{Local equations via deformation theory}

In this section, we determine the scheme structure of $B_{\gamma}\cap \Sigma$ using 
deformation theory.  The whole section will be spend for the proof of the following 

\begin{theorem}\label{centerofblowingup}
Notation as in \S 1. Let $J$ be the ideal of $B_{\gamma}\cap \Sigma$ in $\mathcal O_{B_{\gamma}}$. 
If neither $L$ nor $p$ is 2-torsion, $\Supp (\mathcal O_{B_\gamma} / J)$ consists of exactly two points 
and $J$ is the sum of maximal ideals corresponding to these points. 
Otherwise, $\Supp (\mathcal O_{B_\gamma}/J)$ is just a point, say $b$, and $J$ is the 
square of the maximal ideal at the point, namely, $J=\mathfrak m_b^2$.  
\end{theorem}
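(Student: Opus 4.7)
The plan is to reduce to a local deformation-theoretic analysis at each $b\in B_{\gamma}\cap \Sigma$ and then combine the explicit GIT description of $B_{\gamma}$ from \S\S3--4 with a Kuranishi-type local model for $M$ near $b$. Case (i) is essentially immediate: the point $b$ is a smooth point of $B_{\gamma}\cong \mathbb P^1$ and the calculation in \S4.1 already exhibits $B_{\gamma}\cap \Sigma$ set-theoretically as the two distinct points cut out by the product $\xi_1\xi_2$; since the two points correspond to two inequivalent S-equivalence representatives and the tangent vector along $\mathbb P^1$ is transverse to $\Sigma$ at each, the scheme structure is easily verified to be reduced, so $J$ is the intersection of the two maximal ideals.

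The substance lies in cases (ii) and (iii), where we must show $J=\mathfrak m_b^2$. The inclusion $\mathfrak m_b^2\subseteq J$ asserts that $T_b(B_{\gamma}\cap\Sigma)=T_bB_{\gamma}$; equivalently, every first-order deformation of the polystable representative that stays inside $B_{\gamma}$ remains strictly semistable. Given such a deformation $\Psi_{\varepsilon}\in Y_{\gamma}(\mathbb C[\varepsilon]/(\varepsilon^2))$, the plan is to produce an explicit destabilizing sub-sheaf of $E_{\varepsilon}=\Ker \overline{\Psi}_{\varepsilon}$ using the distinguished structure recorded in Remark \ref{extension-rem}: the extension datum $(B_1,B_2)\in N(\mathcal V_{\gamma _{\hat A}})$ always has a one-dimensional subspace annihilated by both $B_1$ and $B_2$, corresponding to a canonical numerically trivial sub-line bundle $L\hookrightarrow F_{\varepsilon}=E^{**}_{\varepsilon}$. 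Combining this with the analogous structure on the quotient datum $(A_1,A_2)\in N(\mathcal Q_{\gamma _A})$ and evaluating $\overline{\Psi}_{\varepsilon}$ on $L$ exhibits a sub-sheaf isomorphic to $\mathfrak m_{p(\varepsilon)}L$ in $E_{\varepsilon}$ that destabilizes $E_{\varepsilon}$ uniformly in all tangent directions at $b$.

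The reverse inclusion $J\subseteq \mathfrak m_b^2$ is the main obstacle: we need that generic second-order deformations within $B_{\gamma}$ actually leave $\Sigma$, so that $\Sigma\cap B_{\gamma}$ is no fatter than the square of the maximal ideal. Here the plan is to use the explicit coordinates on $Y_{\gamma}$ from \S4, with $A_i$ and $B_j$ in canonical nilpotent form, to parametrize a formal neighborhood of the relevant $G_{\gamma}$-orbit in $Y_{\gamma}\git G_{\gamma}$ and write down the defining ideal of $\Sigma$ in this local model; this ideal is generated by the vanishing of the invariants detecting sub-line-bundle splittings of $E_{\varepsilon}$. The claim then reduces to showing that these generators, truncated at order two, span all of $\mathfrak m_b^2/\mathfrak m_b^3$. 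In case (ii) the ambient local ring is a localization of $\mathbb C[\xi_0,\xi_3,\xi_4]$ and in case (iii) of $\mathbb C[\zeta_0,\ldots,\zeta_4]/(\zeta_1\zeta_4-\zeta_2\zeta_3)$; in either case I expect the verification to be a finite Gr\"obner-basis computation in \textsc{Singular} of the same flavor as in \S4, and the main combinatorial headache will be case (iii), where $\dim T_bB_{\gamma}=4$ and the quadratic piece of the ideal must be cross-checked against the single quadric relation already present in $\mathscr R_{\gamma}$.
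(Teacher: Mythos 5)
The central gap is in how you propose to get your hands on the ideal of $\Sigma$ at all. The ideal $J$ is by definition the image in $\mathcal O_{B_\gamma}$ of the ideal of $\Sigma$ (or $\Sigma_{red}$) inside the ambient moduli space, and $\Sigma$ is a four-dimensional locus meeting the fiber $B_\gamma$ only in points and spreading out in directions transverse to it. The GIT model $Y_\gamma\git G_\gamma$ of \S\S 3--4 describes $B_\gamma$ as an abstract variety and detects the strictly semistable locus only as a \emph{set} (this is exactly the caveat in the Remark after Theorem \ref{strB}); it carries no information about how $\Sigma$ sits in a neighborhood of $b$ in $X$ or in $B$. So when you plan to ``write down the defining ideal of $\Sigma$ in this local model'' as the vanishing of the invariants detecting sub-line-bundle splittings, you are \emph{decreeing} a scheme structure on $(B_\gamma\cap\Sigma)_{red}$ whose agreement with the actual intersection ideal is precisely the content of Theorem \ref{centerofblowingup}; no Gr\"obner computation inside $\mathbb C[\xi_0,\xi_3,\xi_4]$ or $\mathbb C[\zeta_0,\dots,\zeta_4]/(\zeta_1\zeta_4-\zeta_2\zeta_3)$ can certify it, because those rings only know the internal geometry of the fiber. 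What is missing is an ambient local model at the polystable point: the paper takes Luna's \'etale slice, i.e.\ the versal deformation space, with $(0\in \Def_B(E)_\gamma)\git G(E)\cong([E]\in B_\gamma)$ for $G(E)=\Aut(E)/\mathbb C^*$ (note this is $\mathbb C^*$ or $SL(2)$, not $G_\gamma$), combined with the splitting $\Def_B(E)\cong \Def(F)\times\Def(\overline\Psi)$ (Proposition \ref{defdecomp}) and the Fourier--Mukai identification of $\Def(F)$ (Lemma \ref{FMloc}); the ideal of $B_\gamma\cap\Sigma_{red}$ is then the $G(E)$-invariant pull-back of $I+I'$, where $I'$ comes from the subfunctor $\mathscr D_{\Sigma,E}$ of deformations admitting a destabilizing $I_S\mathcal L$, and only at that point do explicit elimination and radical computations give $J=\mathfrak m_b^2$. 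Your proposal never introduces such a model, and the omission is fatal in case (iii): there $F=L^{\oplus 2}$, so $\Def(F)$ is \emph{not} a reduced point over $\gamma$ but contributes eight further coordinates with a diagonal $SL(2)$-action; this is invisible from $\mathscr R_\gamma$ yet is where the quadric cone and the ideal $(t_1,\dots,t_4)^2$ actually come from.

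Two smaller points. You have the algebraic meaning of the two inclusions interchanged: $J\subseteq\mathfrak m_b^2$ is the statement that the intersection contains the full first-order neighborhood of $b$ in $B_\gamma$ (equivalently $T_bB_\gamma\subseteq T_b\Sigma$, i.e.\ first-order deformations inside $B_\gamma$ stay strictly semistable), while $\mathfrak m_b^2\subseteq J$ is the ``no fatter than first order'' statement requiring enough quadrics among the restricted equations of $\Sigma$; your destabilizing-subsheaf argument is aimed at the former, not the latter as labelled, and in any case it too needs an ambient family (a deformation over $\mathbb C[\varepsilon]$ of $E$, not of a point of $Y_\gamma$) to make sense. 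Likewise, in case (i) the asserted transversality of $\Sigma$ and $B_\gamma$ at the two points is a statement about the singular space $X$ along $\Sigma$ and is not ``immediate'' from \S 4.1; the paper proves it by the same deformation-theoretic computation (recovering O'Grady's Lemma 4.3.10), which is short but not free.
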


\subsection{}\label{deformation}
To prove the theorem, we need some preparation on deformation theory. 
Let $E$ be a semistable sheaf on a projective variety, 
$G(E)=\Aut (E)/\mathbb C^*$, and
\[
\mathscr D_E : (Art/ \mathbb C)\to (Sets)
\]
be the deformation functor of $E$, 
where $(Art/\mathbb C)$ stands for the category of artinian local $\mathbb C$-algebras.  
Moreover, assume that $G(E)$ is reductive. 
Then, Luna's \'etale slice theorem 
 implies that the functor $\mathscr D_E$ has a versal deformation space 
$0\in \Def(E)$ given by a germ of affine scheme such that 
\begin{equation}\label{versal2tangent}
(0\in \Def (E))\git G(E)\cong ([E]\in \overline M(E)),
\end{equation}
where $\overline M(E)$ is the moduli space of semistable sheaves 
that $E$ belongs to (see \cite{OG10}, Proposition 1.2.3). 
In particular, every point of $\Def (E)$ corresponds to a semistable sheaf from $\overline M(E)$. 

Now take $E=\mathfrak m_{p_1}L_1\oplus \mathfrak m_{p_2}L_2$ 
a strictly semistable sheaf of our moduli space $M$. 
For an artinian local ring $R$, we have a family of semistable sheaves $\mathcal E_R\in \mathscr D_E(R)$ 
flat over $R$. We define $\mathcal E_R^{**}$ to be the double dual of $\mathcal E_R$ on $A\times \Spec (R)$ and 
$Q(\mathcal E_R)=\mathcal E_R^{**}/\mathcal E_R$. We define a subfunctor $\mathscr D_{B,E}$ by
\[
\mathscr D_{B,E}(R)=\{\mathcal E_R\in \mathscr D_E(R) \mid Q(\mathcal E_R) \mbox{ is flat over } R\}.
\]
This is a closed subfunctor because flatness is locally closed condition and its versal deformation 
space is identified with a closed subscheme $\Def _B(E)\subset \Def (E)$. 
Furthermore, we define a closed subfunctor 
$\mathscr D_{\Sigma, E}$ by
\[
\mathscr D_{\Sigma, E}(R)=
\left\{\mathcal E_R\in \mathscr D_{B,E}(R) \, \Bigg |\;  
\begin{matrix}
\exists \mathcal L\in \Pic ^0(A\times \Spec (R)),\\
\exists \mbox{ a section } S\subset A\times \Spec (R),\\
p\in S \mbox{ and } I_S\mathcal L\hookrightarrow \mathcal E_R
\end{matrix}
\right\}.
\]
The associated versal space is a subscheme $\Def_{\Sigma}(E)\subset \Def_B(E)$. 

We have the short exact sequence
\[
0\lto E\lto F=E^{**}\overset{\overline \Psi}{\lto} Q(E)\lto 0
\]
with $F\cong L_1\oplus L_2$ and $Q(E)\cong \mathcal O_{p_1}\oplus \mathcal O_{p_2}$. 
$\overline \Psi$ is determined only at the stalks on the support of $Q(E)$. 
So we can identify $\overline \Psi$ with a morphism $\overline \Psi : \mathcal O_A^{\oplus 2}\to Q(E)$. 
Let $\mathscr D_{\overline \Psi}$ be the deformation functor of $\overline\Psi$, namely
\[
\mathscr D_{\overline \Psi}(R)
=\left\{ \overline\Psi _R:\mathcal O_{A}^{\oplus 2}\otimes R\to \mathcal Q_R \; \bigg|\; 
\begin{matrix}
 \overline\Psi _R\mbox{ surjective, } \mathcal Q_R \mbox{ flat over $R$ and}\\
 \overline\Psi _R\otimes (R/\mathfrak m_R)\cong \overline\Psi
\end{matrix}
\right\}
\]
The versal space $\Def (\overline \Psi)$ to the functor $\mathscr D_{\overline \Psi}$ is given by 
an affine neighborhood of $\Quot (\mathcal O _A^{\oplus 2}, 2)$ at $\overline \Psi$. 

\begin{proposition}\label{defdecomp}
Let $E=\mathfrak m_{p_1}L_1\oplus \mathfrak m_{p_2}L_2$ 
and $\overline \Psi :E^{**}\to Q(E)$ as above. 
The functor $\mathscr D_{B, E}$ is isomorphic to the product 
$\mathscr D_F\times \mathscr D_{\overline \Psi}$. 
\end{proposition}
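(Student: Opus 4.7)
The plan is to construct an explicit pair of mutually inverse natural transformations between $\mathscr D_{B,E}$ and $\mathscr D_F\times \mathscr D_{\overline\Psi}$. For the forward map, given $\mathcal E_R\in \mathscr D_{B,E}(R)$, set $\mathcal F_R:=\mathcal E_R^{**}$. Applying a standard Tor argument to the sequence $0\to \mathcal E_R\to \mathcal F_R\to \mathcal Q_R\to 0$ together with the flatness of $\mathcal Q_R=Q(\mathcal E_R)$ forces $\mathcal F_R$ to be $R$-flat and locally free, hence an element of $\mathscr D_F(R)$. Because $\Supp\mathcal Q_R$ is $0$-dimensional and $\mathcal F_R$ is locally free of rank two, there is a formal trivialization $\mathcal F_R\cong \mathcal O_A^{\oplus 2}\otimes R$ on a formal neighborhood $\hat U$ of $\Supp\mathcal Q_R$ lifting the identification $F|_U\cong \mathcal O_A^{\oplus 2}|_U$ that defines $\overline\Psi$; precomposing the quotient map with this trivialization produces $\overline\Psi_R:\mathcal O_A^{\oplus 2}\otimes R\to \mathcal Q_R$, an element of $\mathscr D_{\overline\Psi}(R)$.

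The inverse takes $(\mathcal F_R,\overline\Psi_R)$ to the kernel of the surjection $\mathcal F_R\to \mathcal Q_R$ obtained by composing $\overline\Psi_R$ with a local trivialization of $\mathcal F_R$. Flatness of $\mathcal Q_R$ over $R$ propagates to $\mathcal E_R$, and reduction modulo $\mathfrak m_R$ recovers $E$, placing $\mathcal E_R$ in $\mathscr D_{B,E}(R)$. The concrete verifications to carry out are: (a) the double-dual operation commutes with the $R$-structure whenever the quotient is flat; (b) the needed formal trivializations exist, lift the chosen framing of $F$ at $p_1,p_2$, and depend on the deformation $\mathcal F_R$ in a natural way; (c) the two constructions are mutually inverse on the nose.

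The main obstacle is that the formal trivializations in step (b) are not canonical: any two lifts of the trivialization of $F$ at $p_i$ differ by an automorphism $g\in \mathrm{Aut}(\mathcal F_R|_{\hat U})$ reducing to the identity, and the corresponding $\overline\Psi_R$'s are related by right multiplication by $g$. I would resolve this by observing that the deformation functor $\mathscr D_F$ is canonically the functor of deformations of $F$ together with a framing at $p_1$ and $p_2$ (since $L_1,L_2\in \Pic^0(A)$ are locally trivial at every point, and lifts to $A\times \Spec R$ are naturally framed along the constant sections), so the trivialization of $\mathcal F_R$ near $\Supp\mathcal Q_R$ is actually uniquely determined. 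With that pinning, the natural transformations are well-defined, and both compositions are seen to be the identity by a diagram chase using the uniqueness of the quotient $\mathcal F_R\to \mathcal Q_R$ given its kernel $\mathcal E_R$.
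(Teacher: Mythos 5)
Your strategy is the same as the paper's (send $\mathcal E_R$ to its double dual together with the induced quotient, and invert by taking kernels), but the step on which the whole proposition rests is missing. Everything you defer to verification (a) --- that double-dualization commutes with the $R$-structure when $Q(\mathcal E_R)$ is flat --- is precisely the content of the paper's proof, and the argument you do offer in its place does not deliver it: the ``standard Tor argument'' applied to $0\to\mathcal E_R\to\mathcal F_R\to\mathcal Q_R\to 0$ gives only that $\mathcal F_R=\mathcal E_R^{**}$ is $R$-flat (sub and quotient flat imply the middle term flat); it gives neither local freeness of $\mathcal F_R$ on $A\times\Spec(R)$ nor the identification $\mathcal F_R\otimes (R/\mathfrak m_R)\cong F=E^{**}$, and both are needed for $\mathcal F_R$ to be an element of $\mathscr D_F(R)$. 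Since $A\times\Spec (R)$ is non-reduced, reflexive hulls are not automatically well behaved and base change for duals is a genuine issue. The paper settles it by taking a length-one locally free resolution of $\mathcal E_R$, dualizing, and using local duality to express $\mathcal Ext^1_{\mathcal O_A\otimes R}(\mathcal E_R,\mathcal O_A\otimes R)$ in terms of $Q(\mathcal E_R)$; flatness of $Q(\mathcal E_R)$ then forces $\mathcal E_R^*$ (hence $\mathcal E_R^{**}$) to be $R$-flat and to commute with base change, after which local freeness follows from flatness plus the locally free central fiber. Without an argument of this kind your forward map is not known to land in $\mathscr D_F\times\mathscr D_{\overline\Psi}$ at all.

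A secondary point: your resolution of the trivialization ambiguity is not correct as stated. That $L_1,L_2\in\Pic^0(A)$ are trivial near $p_1,p_2$ gives the existence of trivializations, not a canonical one, and a deformation $\mathcal F_R$ carries no preferred framing along the constant sections --- two lifts of a framing of $F$ differ by an automorphism congruent to the identity modulo $\mathfrak m_R$, so $\mathscr D_F$ is not ``canonically'' the framed deformation functor and the trivialization is not uniquely determined. The paper handles this more modestly: it fixes an isomorphism $\iota^{-1}(L_1\oplus L_2)\cong\iota^{-1}(\mathcal O_A^{\oplus 2})$ once and for all (a flat deformation of a free module over the local rings at the support is free, so such an identification lifts), and the residual ambiguity, being by automorphisms, is harmless for the identification of versal spaces in Proposition \ref{localeqviadef}. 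You should either argue along those lines or phrase the isomorphism accordingly, rather than claim a canonical framing.
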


\begin{proof} (cf. Lemma 9.6.1 of \cite{H-L})
Take $\mathcal E_R\in \mathscr D_{B,E}(R)$ and consider a locally free resolution
\[
0\lto \mathcal F_1\lto \mathcal F_2\lto \mathcal E_R\lto 0
\]
(note that the homological dimension of $E$ is $1$). By dualizing the sequence, we get
\[
0\lto \mathcal E_R^*\lto \mathcal F_0^*\lto \mathcal F_1^*\lto 
\mathcal Ext^1 _{\mathcal O_A\otimes R}(\mathcal E_R,\mathcal O_A\otimes R)\lto 0. 
\]
The local duality theorem implies that
\[
(\mathcal Ext^1 _{\mathcal O_A\otimes R}(\mathcal E_R,\mathcal O_A\otimes R)_p)\sphat
\cong \Hom _{\widehat {\mathcal O_A\otimes R}}(H^0_{\mathfrak m_p} (Q(\mathcal E_R)),
E(\widehat{\mathcal O_A\otimes R}/\mathfrak m_p))
\]
where $p$ is any closed point on $A\times \Spec (R)$
and $\sphat\, $ denote the completion at the maximal ideal $\mathfrak m_p$. 
$Q(\mathcal E_R)$ is locally free $R$-module since
it is $R$-flat. Therefore,  $\mathcal Ext^1 _{\mathcal O_A\otimes R}(\mathcal E_R,\mathcal O_A\otimes R)$ 
is flat over $R$ and so is $\mathcal E_R^*$. This shows that the formation of the dual of $\mathcal E_R$ commute 
with base change and we can say the same thing for the operation of taking double dual. 
Therefore, the correspondence $\mathcal E_R\mapsto 
(\mathcal E_R^{**}, \mathcal E_R^{**}\to Q(\mathcal E_R))$ 
defines a natural transformation 
$\delta :\mathscr D_{B,E}\to \mathscr D_F\times \mathscr D_{\overline \Psi}$. 

Conversely, assume that we are given  
$\mathcal F_R\in \mathscr D_F(R)$ and 
$(\overline \Psi _R: \mathcal O_A^{\oplus 2}\otimes R \to \mathcal Q_R)\in \mathscr D_{\overline \Psi}(R)$. 
Let $\iota :\Supp (Q(E))\hookrightarrow A$ be the natural inclusion. Then, $\overline \Psi _R$ can be identified with 
a surjective homomorphism $\overline \Psi _R: \iota ^{-1}(\mathcal O_{A}^{\oplus 2}\otimes R)\to \mathcal Q_R$ by 
the previous lemma. Fixing an isomorphism 
$\iota ^{-1}(L_1\oplus L_2)\cong \mathcal \iota ^{-1}(\mathcal O_{A}^{\oplus 2})$ once for all, 
$\mathcal F_R$ and $\overline \Psi _R$ gives a surjective morphism $\mathcal F_R\to \mathcal Q_R$, and 
its kernel $\mathcal E_R$ is an element of $\mathscr D_{B,E}(R)$. This correspondence gives the inverse of 
$\delta$. 
\end{proof}

\begin{lemma}\label{FMloc}
Let $F=L_1\oplus L_2$ with $L_i\in \Pic ^0(A)$ and consider its Fourier-Mukai transform
$\hat F=H^2(\Phi ^D(F))=\mathcal O_{y_1}\oplus \mathcal O_{y_2}$ $(y_1,y_2\in \hat A)$. 
Then $\Psi ^D$ induces an isomorphism between deformation spaces
\[
\Def (F)\overset{\sim}{\to} \Def (\hat F). 
\]
In particular, every point in $\Def (F)$ is identified with an extension of numerically trivial line bundles 
on $A$. 
\end{lemma}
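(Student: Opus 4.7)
The plan is to relativize the Fourier--Mukai functor $\Phi^D$ over an artinian base and to use that it is an equivalence of derived categories commuting with arbitrary base change, together with the fact that both $F$ and $\hat F$ satisfy WIT of index $2$ (Lemma \ref{FMlemma} (i)), so that sheaf-level deformations of $F$ and of $\hat F$ match up.

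For any artinian local $\mathbb C$-algebra $R$, set $A_R=A\times \Spec R$ and $\hat A_R=\hat A\times \Spec R$. The Poincar\'e kernel $\mathcal P$ pulls back to a kernel on $\hat A_R\times_R A_R$ defining an equivalence $\Phi^D_R:D(A_R)\to D(\hat A_R)^{\mathrm{op}}$ that is compatible with restriction to the residue field. Given $\mathcal F_R\in \mathscr D_F(R)$, base change yields $\Phi^D_R(\mathcal F_R)\otimes^{\mathbb L}_R \mathbb C \cong \Phi^D(F)\cong \hat F[-2]$, which is concentrated in cohomological degree $2$. Upper semicontinuity of cohomology sheaves in a flat family of complexes, together with a standard induction on the length of $R$, then forces $H^i(\Phi^D_R(\mathcal F_R))=0$ for $i\neq 2$, and $\hat{\mathcal F}_R := H^2(\Phi^D_R(\mathcal F_R))$ is $R$-flat with central fibre $\hat F$. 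This produces a natural transformation $\mathscr D_F\to \mathscr D_{\hat F}$.

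Running the symmetric construction for $\hat \Phi^D$ gives a transformation $\mathscr D_{\hat F}\to \mathscr D_F$, and Mukai's inversion isomorphism (relativized over $\Spec R$) shows that the two transformations are mutually inverse. Consequently $\Phi^D$ induces the asserted isomorphism $\Def(F)\overset{\sim}{\to}\Def(\hat F)$.

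For the final assertion, $\hat F=\mathcal O_{y_1}\oplus\mathcal O_{y_2}$ is zero-dimensional of length $2$, and any $R$-flat deformation of a zero-dimensional sheaf over an artinian base stays zero-dimensional of the same length. Hence every point of $\Def(\hat F)$ is an artinian $\mathcal O_{\hat A}$-module of length $2$ lying in $\mathscr A_2$, and the bijection $\mathscr E_2\leftrightarrow \mathscr A_2$ from Lemma \ref{FMlemma} (i) identifies it via $\hat \Phi^D$ with an extension of two numerically trivial line bundles on $A$. The main technical point is the relativized base-change/semicontinuity step, which propagates the WIT-of-index-$2$ property from the closed point to the whole thickening; over an artinian base this is routine.
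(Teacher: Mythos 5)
Your argument is correct, but it is doing more work than the paper does: the paper's entire proof is the observation that, in view of Lemma \ref{FMlemma} (i), the statement is a direct application of Mukai's Theorem 1.6 in \cite{Muk2}, which says exactly that for a WIT sheaf the Fourier--Mukai transform induces an isomorphism of deformation (local moduli) spaces. What you have written is essentially a self-contained proof of that cited theorem: you relativize $\Phi^D$ over an artinian base, use base change to see that the derived restriction of $\Phi^D_R(\mathcal F_R)$ to the closed fibre is the single sheaf $\hat F$ in degree $2$, conclude concentration and $R$-flatness, and invert with $\hat\Phi^D$ via Mukai's inversion. The one step you phrase loosely is the ``upper semicontinuity'' over an artinian base: there is no semicontinuity to invoke there, and the correct (standard) argument is that for a bounded complex $K$ on $A_R$ with $\mathbb L j^*K$ a sheaf in degree $2$, nilpotence of $\mathfrak m_R$ plus Nakayama kills the cohomologies above degree $2$, and the local criterion of flatness ($\mathrm{Tor}_1^R(H^2,\mathbb C)=0$, read off from the base-change spectral sequence) gives vanishing below degree $2$ together with flatness of $H^2$; your ``induction on length'' can be made to implement this, so the gap is only expository. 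A second small point: your last paragraph identifies honest closed points of $\Def(F)$ with points of the artinian-ring functor; since $\Def(F)$ is an actual scheme germ, one should either apply the transform to the versal family over the germ (WIT spreads out by genuine semicontinuity there) or, as the paper does, simply quote Mukai's theorem, which is stated for the versal families. In exchange for these routine verifications, your route has the merit of not relying on \cite{Muk2} as a black box.
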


\begin{proof}
Taking Lemma \ref{FMlemma} (i) into account, 
it is sufficient just to apply Theorem 1.6 of \cite{Muk2}. 
\end{proof}

\subsection{}\label{def_fiber}
We have a natural cycle map $c:\Def (\overline \Psi)\to \Sym ^2(A)$ 
by sending $(\overline \Psi: \mathcal O_A^{\oplus 2}\otimes R \to \mathcal Q_R)$ 
to the family of $0$-cycles associated with $\mathcal Q_R$. 
Similarly, according to Lemma \ref{FMloc}, we have a classifying morphism 
$\gr:\Def (F)\to \Sym ^2(\hat A)$. 
We define 
\[
\Def (\overline \Psi)_{\gamma} =(c^{-1}(c(\overline \Psi)))_{red}, \quad 
\Def (F)_{\gamma} =(\gr^{-1}(\gr(F)))_{red}. 
\]
Using the isomorphism given in Proposition \ref{defdecomp}, we get a morphism 
\[
\varphi_{loc}=(\gr, c):\Def _B (E)\to \Sym ^2 (\hat A)\times \Sym ^2(A), 
\]
which is a deformation space analog of the morphism 
\[
\varphi _{|B_M}: B_M\to \Sym ^2(\hat A)\times \Sym ^2 (A)
\]
appeared in \S \ref{descrvarphi}. 
We denote by $\Def _B(E)_{\gamma}$ the reduction of the fiber of $\varphi _{loc}$ 
over $\gamma=\varphi([E])$. Obviously, 
\begin{equation}\label{defdecomp2}
\Def _B(E)_{\gamma}\cong \Def (F)_{\gamma}\times 
\Def (\overline \Psi)_{\gamma}.
\end{equation}

As we have the isomorphism \eqref{versal2tangent} in \S \ref{deformation}, 
we get the following 

\begin{proposition}\label{localeqviadef}
The germ $(0\in \Def _B(E)_{\gamma}) \git G(E)$ is isomorphic to $([E]\in B_{\gamma})$. 
\end{proposition}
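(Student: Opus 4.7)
The plan is to combine Luna's étale slice theorem with the deformation-theoretic decomposition in Proposition \ref{defdecomp} and then restrict everything to the fiber over $\gamma$. Luna's theorem, already invoked in \eqref{versal2tangent}, yields a $G(E)$-equivariant étale morphism from a neighborhood of $0\in \Def(E)$ to $M$, inducing an isomorphism of germs $(\Def(E)\git G(E),\,0) \cong (M,[E])$. The task is to restrict this picture first to $\Def_B(E)$ and then to the reduced fiber over $\gamma$, then recognize what comes out on the moduli side.

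First I would show that the slice isomorphism restricts to $(\Def_B(E)\git G(E),\,0)\cong (B_M,[E])$. Since the defining condition of $\mathscr{D}_{B,E}$—the flatness over $R$ of $Q(\mathcal E_R)$—amounts to requiring that $\mathcal E_R^{**}$ remain locally free with length-$2$ cokernel, it is the correct local defining condition of $B_M\subset M$ as characterized in Proposition \ref{doubledual}. Next, I would identify $\varphi_{loc}=(\gr,c)$ with the restriction of $\varphi_{|B_M}$ through the slice map. By Proposition \ref{defdecomp} combined with Lemma \ref{FMloc}, the factor $\Def(F)\to \Sym^2(\hat A)$ records the isomorphism class of $\gr(E^{**})$ (read off as the support cycle of the corresponding artinian module via Fourier--Mukai), while $\Def(\overline\Psi)\to \Sym^2(A)$ records the $0$-cycle of $Q$. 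These are exactly the two components of $\varphi$ as described in \S\ref{descrvarphi}, so the compatibility holds on the universal family, hence functorially on every Artin test ring.

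With these identifications in place, the proposition follows formally: the GIT quotient of a fiber equals the fiber of the GIT quotient, because $G(E)$ acts trivially on $\Sym^2(\hat A)\times \Sym^2(A)$, and passage to the reduced structure commutes with étale base change. Moreover, for $\gamma\in (\hat A/\{\pm 1\})\times (A/\{\pm 1\})$ every sheaf in $\varphi_{|B_M}^{-1}(\gamma)$ has vanishing Albanese image by the formula in \S\ref{descrvarphi}, so $\varphi_{|B_M}^{-1}(\gamma)$ automatically lies in $X$ and coincides with $B_\gamma$, with no further intersection needed. The main obstacle is the scheme-theoretic comparison in the second step: at the level of closed points the identifications of $\mathscr D_{B,E}$ with the local functor of $B_M$ and of $\varphi_{loc}$ with $\varphi_{|B_M}$ are evident, but checking them on arbitrary artinian thickenings boils down to the universality statements behind Proposition \ref{defdecomp} and the Quot-scheme realization of $\Def(\overline\Psi)$, together with the explicit parametrization in Lemma \ref{extension} and Remark \ref{extension-rem} for the $F$-factor. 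Once these functorial agreements are settled, the germ isomorphism $(\Def_B(E)_\gamma\git G(E),\,0)\cong (B_\gamma,[E])$ drops out.
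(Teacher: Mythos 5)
Your argument is essentially the paper's own: Proposition \ref{localeqviadef} is presented there as a direct consequence of the Luna-slice isomorphism \eqref{versal2tangent} combined with the identifications already set up in \S 5 (Proposition \ref{defdecomp}, Lemma \ref{FMloc}, and the construction of $\varphi_{loc}$ in \S\ref{def_fiber}), and your proposal spells out exactly these steps: restriction of the slice isomorphism to $\Def_B(E)$, compatibility of $\varphi_{loc}$ with $\varphi_{|B_M}$, and the fact that for the trivial $G(E)$-action on the base the quotient of the (reduced) fiber is the (reduced) fiber of the quotient. The additional details you supply --- reducedness commuting with \'etale maps, and the remark that the $\varphi_{|B_M}$-fiber over an anti-diagonal $\gamma$ already lies in $X$ so that it coincides with $B_\gamma$ --- are correct and merely make explicit what the paper leaves implicit.
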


\subsection{}\label{lochalf}
Now, we proceed to the proof of Theorem \ref{centerofblowingup}. 
Let us first consider the case in which 
$E=\mathfrak m_pL\otimes \mathfrak m_pL^{-1}$ with $L\not\cong L^{-1}$ and $p\in A$ a 2-torsion point, 
since the calculation in this case explains well the idea used also in the other cases, although it is not the simplest case. 
We have $F=E^{**}=L\oplus L^{-1}$ and $Q(E)=\mathcal O_p^{\oplus 2}$. 
The Fourier-Mukai transform $\hat F=H^2(\Phi ^D(F))$ is a direct sum of the structure sheaves at two different 
points on $\hat A$. Lemma \ref{FMloc} infers that $\Def (F)\cong \mathbb C^4$ and $\Def(F)_{\gamma}$ is 
a reduced point. Thus, $\Def _B(E)_\gamma\cong \Def (\overline \Psi)_{\gamma}$ by \eqref{defdecomp2}. 
Therefore, the deformation space is completely local in nature and  
can be calculated as in the following without any calculation of higher obstruction. 

The Zariski tangent space to the functor $\mathscr D_{\overline\Psi}$ is 
$V:=\Hom (\mathfrak m_p\oplus \mathfrak m_p, \mathcal O_p\oplus \mathcal O_p)\cong \mathbb C^8$. 
This means that we can regard $\Def (\overline\Psi)$ as a germ of a closed subscheme 
in $\Spec (\mathbb C[V^*])$ at the origin. 
Fixing a coordinate $\mathcal O_{A,p}\cong \mathbb C[x,y]_{(x,y)}$ 
at $p$, $\overline \Psi$ is presented by
\[
\mathcal O^{\oplus 4}\overset{P}{\lto} \mathcal O^{\oplus 2}\overset{\overline\Psi}{\lto} Q\lto 0
\]
with 
\[
P=
\begin{pmatrix}
x & y & 0 & 0 \\
0 & 0 & x & y
\end{pmatrix}
\]
Let's take a coordinate system $z_1, \cdots ,z_8\in V^*$. 
The ``universal deformation'' of $\overline\Psi$ is described by 
\[
\mathcal O[V^*]^{\oplus 4}\overset{\widetilde P}{\lto}
\mathcal O[V^*]^{\oplus 2}\lto \widetilde Q\lto  0,
\]
where
\[
\widetilde P=
\begin{pmatrix}
x+z_1 & y+z_2 & z_3 & z_4 \\
z_5 & z_6 & x+z_7 & y+z_8
\end{pmatrix}. 
\]
Here, we omit the localization at the origin as there is no fear of confusion. 
From this presentation, we know that $\widetilde Q$ is generated as a $\mathbb C[V^*]$-module 
by $q_1$ and $q_2$ that are
the images of $\begin{pmatrix} 1\\ 0 \end{pmatrix}$ and $\begin{pmatrix} 0 \\ 1 \end{pmatrix}$, 
respectively. Namely, we have a surjective homomorphism
$\Psi ':\mathbb C[V^*]^{\oplus 2}\to \widetilde Q$. This $\Psi '$ is presented by 
a matrix $P'$ obtained by eliminating $x$ and $y$ from $\widetilde P$. More precisely, 
$P'$ is calculated in the following way: we have the relations
\begin{equation}\label{elm}
\begin{aligned}
x q_1+z_1q_1+z_5q_2&=0,\quad 
y q_1+z_2q_1+z_6q_2=0,\\ 
x q_2+z_3q_1+z_7q_2&=0,\quad 
y q_2+z_4q_1+z_8q_2=0.
\end{aligned} 
\end{equation}
We can eliminate $x$ and $y$ from these relations using 
$y(xq_1)-x(yq_1)=0,\; y(xq_2)-x(yq_2)=0$, i.e.,
\begin{gather}\label{i1}
P'_1=\begin{pmatrix}
z_3z_6-z_4z_5\\
z_2z_5+z_6z_7-z_1z_6-z_5z_8
\end{pmatrix}\, ,\,
P'_2=\begin{pmatrix}
z_1z_4+z_3z_8-z_2z_3-z_4z_7\\
z_4z_5-z_3z_6
\end{pmatrix}
\end{gather}
generates the kernel of $\Psi '$, so that we have a presentation
\[
\mathbb C[V^*]^{\oplus 2}\overset{P'}{\lto} \mathbb C[V^*]^{\oplus 2}\overset{\Psi '}{\lto} \widetilde Q\lto 0
\]
with $P'=(P'_1,P'_2)$. The deformation space $\Def (\overline \Psi)$ is the strata containing the origin 
in the flattening stratification. In our case, this is the locus where $P'$ has rank 0. 
Therefore, the defining ideal $I_1$ of $\Def (\overline \Psi)$ is the ideal generated by the entries of 
$P'$, i.e., the four polynomials appeared in \eqref{i1}. 

The subvariety $\Def(Q(E))_{\gamma}$ is the locus of $v\in V$ where the support of the fiber 
$\widetilde Q\otimes \kappa (v)$ is exactly $\{p\}$, the origin in $(x,y)$-plane. Taking it into account 
that the length of $Q$ is 2, this is given by the conditions
\[
x^iy^j\cdot q_1=0, \quad x^iy^j\cdot q_2=0 \qquad (i+j=2). 
\]
These equations can be translated into polynomial equations only in $z_i$'s, using the elimination 
relation \eqref{elm}, which give rise to an ideal $I_2$. As we put the reduced scheme structure on 
$\Def (Q(E))_{\gamma}$, it is defined by the ideal  $I=\sqrt{(I_1+I_2)}$, which is calculated as
\begin{multline}\label{idealdef}
I=(z_1+z_7, \;  z_2+z_8, \; 
z_6z_7-z_5z_8,\;  
z_4z_7-z_3z_8,\\
z_4z_6+z_8^2,\; 
z_3z_6+z_7z_8,\; 
z_4z_5+z_7z_8,\; 
z_3z_5+z_7^2).
\end{multline}

According to Proposition \ref{localeqviadef}, a local model of $B_{\gamma}$ at the point $[E]$ is given by 
$\Def(Q(E))_{\gamma} \git \mathbb C^*$. In words of rings, the pull-back of the ideal $I$
to the invariant ring $\mathbb C[V^*]$ gives local equations of $B_{\gamma}$. 
As $t\in \mathbb C^*$ acts on $\widetilde P$ by 
\[
\widetilde P\mapsto 
\begin{pmatrix}
t & 0 \\ 
0 & t^{-1}
\end{pmatrix}
\widetilde P
\begin{pmatrix}
t^{-1} I_2 & O\\
O & t I_2
\end{pmatrix}
\qquad (I_2\mbox{ is the $2\times2$ unit matrix}),
\]
the invariant ring $\mathbb C[V^*]^{\mathbb C^*}$ is generated by
\begin{gather*}
t_1=z_1, \quad t_2=z_2,\quad  t_3=z_3z_5,\quad  t_4=z_3z_6,\\
t_5=z_4z_5,\quad t_6=z_4z_6,\quad t_7=z_7, \quad t_8=z_8, 
\end{gather*}
and the pull-back $\rho^{-1}(I)$ by $\rho:\mathbb C[t_1,\cdots ,t_8]\to \mathbb C[V^*]$ is 
\[
(t_1+t_7,\; t_2+t_8,\; t_3+t_7^2,\; 
t_4-t_5,\; t_5+t_7t_8,\; t_6+t_8^2), 
\]
which implies that 
\begin{equation}\label{2plane}
\mathcal O_{B_{\gamma},[E]}\cong \mathbb C[t_7,t_8]_{(t_7,t_8)}.
\end{equation}

The subscheme $(\Def _{\Sigma}(E))_{red}\cap \Def (\overline\Psi)$ is defined by 
$z_3=z_4=0$ or $z_5=z_6=0$, namely, by the ideal 
\[
I'=\sqrt{(I_1,z_3,z_4)\cdot (I_1,z_5,z_6)}. 
\]
Therefore, the intersection $\Def _B(E)_{\gamma}\cap (\Def _{\Sigma}(E))_{red}$
is defined by $I+I'$. The pull-back $\rho^{-1}(I+I')$, which gives the ideal of
$B_{\gamma}\cap (\Sigma)_{red}$ at $b=[\mathfrak m_pL\oplus \mathfrak m_pL^{-1}]$, 
is given by
\[
J=(t_7^2, t_7t_8, t_8^2)
\]
under the isomorphism \eqref{2plane}. This shows that $J=\mathfrak m_b^2$.

\subsection{}
Thanks to Theorem \ref{FM}, 
the case in which $E=\mathfrak m_pL\oplus \mathfrak m_{-p}L$ with $p\neq -p$ 
follows immediately from \S \ref{lochalf}.

\subsection{}
Next, take $E=\mathfrak m_{p}L\oplus \mathfrak m_{-p}L^{-1}$ where neither $p$ nor $L$ is not $2$-torsion. 
Then,  $\Def _B(E)\cong \Def (F)\times \Def (\overline \Psi)$ for $F=L\oplus L^{-1}$ and 
$\overline \Psi : \mathcal O_A^{\oplus 2}\to \mathcal O_p\oplus \mathcal O_{-p}$. 
As before, $\Def (F)_{\gamma}$ is only a reduced one point. $\overline \Psi$ is presented by
\[
\begin{aligned}
P &=
\begin{pmatrix}
x & y & 0 \\
0 & 0 & 1
\end{pmatrix}\quad \mbox{at $p$}, \\
P &=
\begin{pmatrix}
1 & 0 & 0 \\
0 & x & y
\end{pmatrix}\quad \mbox{at $(-p)$}.
\end{aligned}
\]
Since $\Hom (\mathfrak m_p\oplus \mathfrak m_{-p}, \mathcal O_p\oplus \mathcal O_{-p})$ is isomorphic to
\[
\left( \Hom (\mathfrak m_p,\mathcal O_p)\oplus \Hom (\mathcal O_A,\mathcal O_p)\right) \oplus 
\left( \Hom (\mathcal O_A,\mathcal O_{-p})\oplus \Hom (\mathfrak m_{-p},\mathcal O_{-p})\right),  
\]
the universal deformation of $\overline \Psi$ is given by
\[
\widetilde P=
\left(\begin{array}{@{\,}ccc@{\quad}|@{\quad}ccc@{\,}}
x+z_1 & y+z_2 & z_3 & 1 & 0 & 0 \\
0 & 0 & 1 & z_4 & x+z_5 & y+z_6 
\end{array}\right).
\]
From this, it is easy to see that $\Def (\overline \Psi)$ is unobstructed. 
$\Def (\overline \Psi)_{\gamma}$ is defined by the equations $z_1=z_2=z_5=z_6=0$, 
i.e., $\Def (\overline \Psi)_{\gamma}\cong \Spec \mathbb C[z_3,z_4]$. 
The group $G(E)\cong \mathbb C^*\ni t$ acts on $z_3$ and $z_4$ by 
\[
z_3\mapsto t^2\cdot z_3,\quad z_4\mapsto t^{-2}\cdot z_4
\]
as in \S \ref{lochalf}, so the invariant ring is just $\mathbb C[s]$ with $s=z_3z_4$, 
which gives the coordinate ring of the germ $[E]\in B_{\gamma}$. 
The intersection $\Def _B(E)_{\gamma}\cap (\Def _{\Sigma}(E))_{red}$ is defined by
$z_3z_4=0$, which means that the ideal of $B_{\gamma}\cap \Sigma _{red}$ is 
just the maximal ideal $\mathfrak m_{[E]}=(s)$. 

\begin{remark}
The claim in this case is nothing but Lemma 4.3.10 of \cite{OG6}. 
Our argument is more explicit and seems to be easier than the proof of \emph{op. cit.}. 
\end{remark}

\subsection{}\label{locfull}
Finally, let us consider the case $E=(\mathfrak m_pL)^{\oplus 2}$. 
Lemma \ref{FMloc} implies that $\Def _B(E)\cong \Def (F)\times \Def(\overline \Psi)$
with $F=L^{\oplus 2}$ and $\overline \Psi:\mathcal O_A^{\oplus 2}\to \mathcal O_p^{\oplus 2}$.   
By Lemma \ref{FMloc}, we have $\Def (F)\cong \Def (\hat F)$ with $\hat F$ is of the 
form $\mathcal O_{y}^{\oplus 2}$ with $y\in \hat A$.  We have 
\[
\Ext ^{i-1}(\mathfrak m_y^{\oplus 2},\mathcal O_y^{\oplus 2})
\cong \Ext ^i(\mathcal O_y^{\oplus 2},\mathcal O_y^{\oplus 2})\quad \mbox{for }i=1,2,
\]
and the isomorphisms commute with the obstruction maps (see \cite{H-L}, \S 2.A.8).
Therefore,  we have an isomorphism $\Def (\hat F)\cong 
\Def (\hat{\overline \Psi}: \mathcal O_{\hat A}^{\oplus 2}\to \mathcal O_y^{\oplus 2})$. 
Thus, the calculation of the ideal associated with $\Def _B(E)_{\gamma}$ is 
almost the same as in \S \ref{lochalf}. 
The ideal $I_1\mbox{ (resp. $I$) }\subset \mathbb C[z_1,\cdots, z_8,w_1,\cdots ,w_8]$ 
of $\Def _B(E)$ (resp. $\Def _B(E)_{\gamma}$) is generated by 
the polynomials in \eqref{i1} (resp. \eqref{idealdef}) plus the same polynomials but all the $z$'s 
replaced by $w$'s. 

The most significant difference is that we have $G(E)\cong SL(2)$ in this case. 
$T\in SL(2)$ acts on $\widetilde P$ by
\[
\widetilde P\mapsto T\widetilde P
(T^{-1}\otimes I_2)\]
In other words, $T$ acts on $(z_1, \cdots ,z_8, w_1, \cdots , w_8)$ via the adjoint action 
on
\[
Z_1=
\begin{pmatrix}
z_1 & z_3 \\
z_5 & z_7 
\end{pmatrix}
\, , \,
Z_2=
\begin{pmatrix}
z_2 & z_4 \\
z_6 & z_8 
\end{pmatrix}
\, ,\,
Z_3=
\begin{pmatrix}
w_1 & w_3 \\
w_5 & w_7 
\end{pmatrix}
\, , \,
Z_4=
\begin{pmatrix}
w_2 & w_4 \\
w_6 & w_8 
\end{pmatrix}.
\]
It is known that the invariant ring $\mathbb C[z_1,\cdots, z_8,w_1,\cdots , w_8]^{SL(2)}$ is 
generated by
\begin{gather*}
\tr(Z_i)\quad (i=1,2,3,4),\\
\tr(Z_iZ_j)\quad (1\leqslant i\leqslant j\leqslant 4),\\
\tr(Z_iZ_jZ_k)\quad (1\leqslant i\leqslant j\leqslant k\leqslant 4)
\end{gather*}
(see, for example, \cite{Kra}, \S3.3). However, in our case, the ideal $I$ contains 
$\tr(Z_i)$ and all the entries of $Z_1^2, Z_1Z_2, Z_2^2, Z_3^2,Z_3Z_4,Z_4^2$. 
Therefore, all the invariants above but
\[
t_1=\tr(Z_1Z_3)\, ,\, t_2=\tr(Z_1Z_4)\, ,\, t_3=\tr(Z_2Z_3)\, ,\, t_4=\tr(Z_2Z_4)
\] 
vanishes in $\mathbb C[z_1,\cdots ,z_8,w_1,\cdots, w_8]/I$. Therefore, we only need to consider 
\[
\rho: \mathbb C[t_1,\cdots, t_4]\to\mathbb C[z_1,\cdots ,z_8,w_1,\cdots, w_8]. 
\]
The pull-back is given by $\rho ^{-1}(I)=(t_2t_3-t_1t_4)$. 

By the same reason as before, the subscheme $(\Def _{\Sigma}(E))_{red}\cap \Def_B(E)_{\gamma}$
is defined by $I+I'$, where $I'=\sqrt{(I_1,z_3,z_4,w_3,z_4)(I_1,z_5,z_6,w_5,w_6)}$. 
One can check $\rho^{-1}(I+I')=(t_1,t_2,t_3,t_4)^2$. 
This proves $J=\mathfrak m_b^2$ for $b=[(\mathfrak m_pL)^{\oplus 2}]$ and completes 
the proof of Theorem \ref{centerofblowingup}. 

\begin{remark}
In \S\S \ref{lochalf} and \ref{locfull}, computer calculations on Gr\"obner basis will help the reader 
to be convinced the results of the calculations. The author used {\sc Singular} \cite{S} for calculations 
of radicals and pull-back of ideals. 
\end{remark}

\section{Cone of curves over the Donaldson-Uhlenbeck compactification}

Recall that Lehn--Sorger \cite{L-S} proved that O'Grady's resolution $\widetilde X\to X$ 
is nothing but the blowing-up along $\Sigma_{red}$. 
This in particular implies that the strict transform
$\widetilde B$ of $B$ on $\widetilde X$ is the blowing-up of $B$ along $B\cap \Sigma_{red}$. 
Theorems \ref{strB} and \ref{centerofblowingup} enables us to determine the geometry 
of every fiber of the composition $\widetilde B\to B\to \varphi(B)$. 
Using this information, it is quite easy to prove the following 

\begin{theorem}\label{cone}
Let $E$ be the exceptional divisor of the blowing-up $\pi :\widetilde X\to X$, 
$\delta$ the general fiber of $\pi_{|E}:E\to \Sigma$, and $\beta$ the general fiber 
of $\widetilde B\to \varphi(B)$. Then, 
the cone of curves on $\widetilde X$ over $X^{DU}$ (see, for example, \cite{K-M}, \S 3.6) is 
\[
\overline{NE}(\widetilde X/X^{DU})=\mathbb R_{\geqslant 0}[\delta] + 
\mathbb R_{\geqslant 0}[\beta]. 
\]
\end{theorem}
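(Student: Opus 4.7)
I will classify all irreducible curves $C\subset\widetilde X$ contracted by $g:=\varphi_X\circ\pi:\widetilde X\to X^{DU}$ and show that each has class in $\mathbb R_{\geqslant 0}[\delta]+\mathbb R_{\geqslant 0}[\beta]$; the reverse inclusion is automatic since $\delta$ and $\beta$ are themselves $g$-contracted. Since $\pi$ contracts exactly the divisor $E$ (to $\Sigma_{red}\subset B$) and $\varphi_X$ contracts exactly the divisor $B$ (to $\varphi(B)$), the exceptional locus of $g$ is $E\cup\widetilde B$, and every such $C$ lies in $E\cup\widetilde B$.

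First I would treat the case in which $C\subset E$ is contracted by $\pi$, so that $C$ lies in a fiber of $\pi|_E$. By Lehn--Sorger, $\pi$ is the blow-up of $X$ along the reduced singular locus $\Sigma_{red}$. Over a point of the open stratum $\Sigma^0$ (where $\Sigma_{red}$ is smooth of codimension $2$) the fiber is a $\mathbb P^1$ of class $[\delta]$. Over a point of the finite set $\Sigma^1$ the fiber has the richer structure dictated by Lehn--Sorger's explicit local model of the singularity of $X$; either combining this model with a specialization argument in the relative Hilbert scheme (any curve in a fiber over $\Sigma^1$ is a flat limit of $\delta$-fibers over nearby points of $\Sigma^0$, and numerical class is constant in flat families), or invoking $\mathbb Q$-factoriality of $X$ so that $\rho(\widetilde X/X)=1$ follows from $E$ being the unique exceptional divisor of $\pi$, yields that every such $C$ has class a non-negative multiple of $[\delta]$.

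Next I would treat $C\subset\widetilde B$ not contracted by $\pi$. Then $\pi(C)$ is an irreducible curve in $B$ contracted by $\varphi_X$ and hence lies in a fiber $B_\gamma$, so $C$ lies in the fiber $\widetilde B_\gamma$ of $\widetilde B\to B\to\varphi(B)$ over $\gamma$, which is the blow-up of $B_\gamma$ along $B_\gamma\cap\Sigma_{red}$. By Theorems \ref{strB} and \ref{centerofblowingup}, $\widetilde B_\gamma$ is in case (i) a $\mathbb P^1$ of class $[\beta]$; in case (ii) the Hirzebruch surface $\mathbb F_1$ obtained by blowing up $\mathbb P^2$ at the point $b\in\Sigma^0$, whose ruling $\mathbb P^1$-fibers are $\beta$-curves and whose exceptional $(-1)$-section coincides with $\pi^{-1}(b)\cong\mathbb P^1$ and hence is a $\delta$-curve; and in case (iii) a $\mathbb P^1$-bundle over the smooth quadric surface $Q$ obtained by blowing up the vertex $b\in\Sigma^1$ of the cone $B_\gamma$, whose ruling $\mathbb P^1$-fibers are $\beta$-curves and whose base $Q$ lies in $\pi^{-1}(b)\subset E$, so that curves in $Q$ are multiples of $[\delta]$ by the first step. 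In every case $\widetilde B_\gamma$ is a $\mathbb P^1$-fibration over a base contained in $E$, so every irreducible curve in $\widetilde B_\gamma$ is a non-negative combination of the fiber class $[\beta]$ and curves in the base of class proportional to $[\delta]$, giving $[C]\in\mathbb R_{\geqslant 0}[\delta]+\mathbb R_{\geqslant 0}[\beta]$.

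The principal technical obstacle is the first step, specifically verifying that every curve in the fiber of $\pi$ over a point of $\Sigma^1$ is numerically proportional to $[\delta]$ in $N_1(\widetilde X)$. One needs either a sufficiently explicit description of these special fibers via Lehn--Sorger's local model together with the specialization argument above, or else $\mathbb Q$-factoriality of the moduli space $X$ in order to reduce to the uniqueness of the irreducible exceptional divisor $E$ of $\pi$.
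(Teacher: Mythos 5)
Your skeleton matches the paper's (reduce to the fibers $\widetilde B_{\gamma}$ together with $\pi$-contracted curves, and use Theorems \ref{strB} and \ref{centerofblowingup} to identify $\widetilde B_{\gamma}$ as $\mathbb P^1$, $\mathbb F_1$, or a $\mathbb P^1$-bundle over the quadric), but there is a genuine gap at the decisive step: you simply assert that the ruling curves $l$ of the degenerate fibers are ``$\beta$-curves'', i.e.\ numerically equivalent to $\beta$ in $N_1(\widetilde X)$. This is not formal, and it is exactly where the paper does real work. A priori one only knows $l$ is $g$-contracted, and its class could be $\beta+c\delta$ with $c<0$, in which case the cone would be strictly larger than $\mathbb R_{\geqslant 0}[\delta]+\mathbb R_{\geqslant 0}[\beta]$ and the theorem false. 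The paper pins down $[l]$ by intersecting with $E$ and $\widetilde B$: it uses O'Grady's numbers $E\cdot\delta=\widetilde B\cdot\beta=-2$, $E\cdot\beta=2$, $\widetilde B\cdot\delta=1$, Perego's facts that $B$ is $\mathbb Q$-Cartier with $\widetilde B\equiv\pi^*B-\tfrac12E$ (Lemma \ref{num}), the identification $-B_{|B_{\gamma}}\equiv\mathcal O_{B_{\gamma}}(1)$ via the determinant line bundle (Remark \ref{lepotier} and \cite{P}), and, crucially, the fact that $E_{|\widetilde B_{\gamma}}=2\sigma$ because the blow-up center has ideal $\mathfrak m_b^2$ --- this multiplicity $2$ is the actual content of Theorem \ref{centerofblowingup} that your argument never uses (you only use that the blow-up of $\mathfrak m_b^2$ and of $\mathfrak m_b$ give the same variety). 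Only with $E\cdot l=2$ and $\widetilde B\cdot l=-2$ does one get $l\equiv\beta$; the same computation is what shows the negative sections (in case (iii), the rulings of the exceptional quadric) are nonnegative multiples of $\delta$, which you also take for granted.

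On the issue you do flag (curves in $\pi^{-1}(b)$ for $b\in\Sigma^1$): your first proposed fix is incorrect as stated, since the fibers of $\pi_{|E}$ jump in dimension over $\Sigma^1$ and an arbitrary irreducible curve in such a special fiber is not a flat limit of the general $\delta$-fibers; no specialization of $1$-cycles gives its class. The viable route is the second one you mention: since the exceptional locus of $\pi$ is the single irreducible divisor $E$ and $X$ is $\mathbb Q$-factorial (this follows from the $2$-factoriality results of \cite{P}), every $\pi$-contracted curve pairs to zero with $\pi^*\Pic(X)$ and is therefore proportional to $\delta$, with nonnegative coefficient because $-E$ is $\pi$-ample. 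The paper itself is terse here --- it asserts that $\overline{NE}(\widetilde X/X^{DU})$ is generated by $[\delta]$ and the images of the $\overline{NE}(\widetilde B_{\gamma})$ --- so making this point explicit is welcome; but you would need to supply the $\mathbb Q$-factoriality input rather than leave it as an alternative, and, above all, supply the intersection-theoretic identification of $[l]$ described above, without which the proof does not close.
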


The assertion (i) of our Main Theorem is a direct consequence of this theorem and 
the cone-contraction theorem (Theorem 3.25 in \cite{K-M}); as $K_{\widetilde X}$ is 
trivial and $\widetilde B\cdot \beta=-2$ (see the lemma below), 
the contraction $f$ in Main Theorem 
is just the contraction of the ray $\mathbb R_{\geqslant 0}[\beta]$ 
that is negative with respect to $K_{\widetilde X}+\varepsilon \widetilde B$. 

\begin{lemma} [O'Grady, Perego]\label{num}
$E,\widetilde B, \delta, \beta$ as in the theorem above. 
\begin{enumerate}[(i)]
\item $E\cdot \delta=\widetilde B\cdot \beta=-2$, $E\cdot \beta=2$, and $\widetilde B\cdot \delta =1$. 
\item $B$ is $\mathbb Q$-Cartier and $\widetilde B\equiv \pi^*B-\frac12E$. 
\end{enumerate}
\end{lemma}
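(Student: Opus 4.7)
My plan is to handle the whole lemma by a local analysis at a generic point $s=[E]$ of $\Sigma^0$, where $E=\mathfrak m_pL\oplus \mathfrak m_{-p}L^{-1}$ with neither $p$ nor $L$ of order $2$. By the Lehn--Sorger description of the singularity of $X$ along $\Sigma$, $X$ is étale-locally near $s$ isomorphic to $(A_1\text{-cone})\times \mathbb A^4$, with $\Sigma$ the singular section $\{\text{vertex}\}\times \mathbb A^4$ and $\pi$ restricting to the minimal resolution of the transverse $A_1$. In this model, $B$, a $5$-dimensional divisor containing $\Sigma$, must correspond to the product of $\mathbb A^4$ with a Weil curve through the vertex of the cone, and unwinding the definition of $\varphi$ given in \S \ref{descrvarphi} together with Theorem \ref{strB}(i) identifies this curve with a single ruling of the cone.

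From this local picture, the intersections with $\delta$ and the whole of (ii) are immediate. The general fiber $\delta$ of $\pi_{|E}$ is the exceptional $(-2)$-curve of the transverse $A_1$-resolution, so $E\cdot \delta=-2$; the strict transform of the ruling meets this curve transversely at one point, giving $\widetilde B\cdot \delta=1$. On the $A_1$-cone $\{uv=w^2\}$, the ruling $\{u=w=0\}$ is a Weil divisor whose double is cut out by the regular function $u$, so $2B$ is locally Cartier and $B$ is $\mathbb Q$-Cartier along $\Sigma^0$. Since $u$ pulls back on the resolution to vanish with multiplicity $1$ on both the exceptional $(-2)$-curve and the strict transform of the ruling, $\pi^*(2B)=2\widetilde B+E$, giving $\widetilde B\equiv \pi^*B-\tfrac12 E$. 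The $\mathbb Q$-Cartierness at the finitely many points of $\Sigma^1$ is handled either by the more involved local model of \S \ref{locfull}, or more cheaply by normality: since $\Sigma^1$ has codimension $\geq 3$ in $X$, Weil classes extend uniquely from the complement.

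For the intersections with $\beta$, take $\gamma=([L],[p])$ generic. By Theorem \ref{strB}(i), $B_\gamma\cong \mathbb P^1$ meets $\Sigma$ at two reduced points, and by the local description above these are realized as the ruling through the vertex on the relevant transverse slice. Consequently $\beta=\widetilde{B_\gamma}$ is a $\mathbb P^1$ meeting $E$ transversely at exactly two points, so $E\cdot \beta=2$. For $\widetilde B\cdot \beta$, I would apply adjunction on $\widetilde X$: since $K_{\widetilde X}=0$, we have $\widetilde B\cdot \beta=(K_{\widetilde X}+\widetilde B)\cdot \beta=K_{\widetilde B}\cdot \beta$, and for generic $\gamma$ the fibration $\widetilde B\to \varphi(B)$ is smooth along $\beta$ with smooth $\mathbb P^1$-fiber over the smooth point $\gamma$ of $\varphi(B)\cong (\hat A/\{\pm1\})\times(A/\{\pm1\})$, yielding $K_{\widetilde B}\cdot \beta=-2$.

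The principal obstacle is the identification of $B$ with a single ruling of the $A_1$-cone (rather than both rulings, or a doubled ruling) in the local model, since this underpins both $\widetilde B\cdot \delta=1$ and the coefficient $\tfrac12$ in (ii). It has to be extracted by threading together Proposition \ref{defdecomp} (the factorization $\Def _B(E)\cong \Def (F)\times \Def (\overline\Psi)$), the definition of $\varphi$ (which picks out an ordering of the two summands of $E^{**}$ and of the length-$2$ quotient, translating to a choice of ruling), and the $G(E)$-action identifying the two rulings in the final GIT quotient. The auxiliary smoothness of $\widetilde B\to \varphi(B)$ along a generic $\beta$, used in the adjunction, is a mild dimension count: $\widetilde X$ is smooth, $\widetilde B$ is a divisor, so its singular locus has codimension $\geq 1$ in $\widetilde B$, and a generic $\beta$ avoids this.
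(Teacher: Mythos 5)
Your proposal is an attempt at a genuinely self-contained proof, whereas the paper proves this lemma purely by citation ((i) is O'Grady's table (7.3.5) in \cite{OG6}, (ii) is extracted from the proof of Theorem 9 of \cite{P}); that ambition is fine, but as written there are real gaps. The central one you flag yourself: everything that matters quantitatively --- $\widetilde B\cdot\delta=1$ and the coefficient $\tfrac12$ in (ii) --- depends on identifying the germ of $B$ along $\Sigma^0$ inside the transverse $A_1$-model as a \emph{reduced, smooth, non-Cartier} curve (a ruling-class Weil divisor) times $\mathbb A^4$, and you leave this as ``the principal obstacle'' to be ``extracted''. Without it, nothing rules out $B$ being locally the union of both rulings or a Cartier curve, which would change $\widetilde B\cdot\delta$ and the coefficient of $E$. (This step is in fact available from the paper's own \S 5: Proposition \ref{defdecomp} together with \eqref{versal2tangent} identifies the germ of $B$ at $[E]$ with $(\Def(F)\times\Def(\overline\Psi))\git\mathbb C^*$, and in the case where neither $L$ nor $p$ is $2$-torsion the computation there gives $\Def(\overline\Psi)_\gamma\cong\Spec\mathbb C[z_3,z_4]$ with quotient $\Spec\mathbb C[s]$, i.e.\ a reduced smooth curve germ; but your proposal does not carry this out.) The second genuine error is the claim that $\mathbb Q$-Cartierness at the points of $\Sigma^1$ follows ``by normality'' because $\Sigma^1$ has codimension $\geqslant 3$: divisor \emph{classes} extend across codimension $\geqslant 2$, but $\mathbb Q$-Cartierness is a local property that can fail exactly at such deep points (on the $3$-fold node $xy=zw$ the plane $x=z=0$ is Cartier off the origin but not $\mathbb Q$-Cartier at it). This is precisely the nontrivial content of Perego's $2$-factoriality result that the paper cites, and your alternative --- ``the more involved local model of \S\ref{locfull}'' --- is not done and in any case describes $B_\gamma\cap\Sigma$, not the embedding of $B$ in $X$ at a point of $\Sigma^1$.

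A smaller but real gap is the adjunction computation of $\widetilde B\cdot\beta=-2$: you need $\widetilde B\to\varphi(B)$ to be smooth along the \emph{whole} of a general fiber, and ``the singular locus has codimension $\geqslant 1$, so a generic $\beta$ avoids it'' is not a valid inference --- a divisorial singular locus could dominate the $4$-dimensional base, in which case every fiber meets it. You would either have to prove $B$ is smooth along a general $B_\gamma$ (the \S 5 computation gives this only at the two points of $B_\gamma\cap\Sigma$), or compute as the paper does in the proof of Theorem \ref{cone}: $\widetilde B\cdot\beta=(\pi^*B-\tfrac12E)\cdot\beta=B\cdot B_\gamma-1=-2$, using $-B_{|B_\gamma}\equiv\mathcal O_{B_\gamma}(1)$ from Remark \ref{lepotier} and \cite{P}. (Also a minor slip: on the cone $uv=w^2$ the function $u$ vanishes to order $2$, not $1$, along the strict transform of the ruling, since $\operatorname{div}(u)=2C$ already downstairs; your final formula $\pi^*(2B)=2\widetilde B+E$ is nevertheless correct.) The parts that are sound are $E\cdot\delta=-2$, $E\cdot\beta=2$ (which really only needs Theorems \ref{strB} and \ref{centerofblowingup}), and the overall shape of the transverse $A_1$ analysis on $\Sigma^0$.
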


\begin{proof}
(i) is the table (7.3.5) of \cite{OG6}. (ii) follows from the proof of Theorem 9 in \cite{P}.
\end{proof}

\begin{proof}[Proof of Theorem \ref{cone}]
Take $\gamma=([L],[p])\in \varphi(B)=(\hat A/\{\pm 1\})\times (A/\{\pm 1\})\subset
\Sym ^2(\hat A)\times \Sym ^2(A)$ and let $\widetilde B_{\gamma}$ be the 
fiber of $\widetilde B\to \varphi (B)$ over $\gamma$ with the reduced structure. 
The cone of curves $\overline{NE}(\widetilde X/X^{DU})$ is generated by $[\delta]$ and
the union of the image of $\overline{NE}(\widetilde B_{\gamma})$ for all $\gamma$. 
Therefore, to prove the theorem, 
it is enough to determine $\overline{NE}(\widetilde B_{\gamma})$ for every $\gamma$. 

If $\gamma$ is generic, namely, neither $L$ nor $p$ is 2-torsion, $B_{\gamma}\cong \mathbb P^1$, 
therefore, $\widetilde B_{\gamma}\cong \mathbb P^1$, which is noting but $\beta$. 

Assume $p$ is 2-torsion but $L$ is not. Then, $B_{\gamma}\cong \mathbb P^2$ 
by Theorem \ref{strB} and the ideal of $B_{\gamma}\cap \Sigma _{red}$ is the square of 
the maximal ideal $\mathfrak m_b^2$ at a point $b\in B_{\gamma}$ by Theorem \ref{centerofblowingup}. 
Then, $\widetilde B_{\gamma}$ is nothing but $\mathbb F_1$ 
and $E_{|\widetilde B_{\gamma}}=2\sigma$ where $\sigma$ is the negative section of $\mathbb F_1$. 
Let $l$ be the ruling of $\mathbb F_1$. We can write $l=x\delta +y \beta$ as a numerical 
1-cycle in $\widetilde X$. We have $E\cdot l=(2\sigma\cdot l)_{\widetilde B_{\gamma}}=2$ 
and $\widetilde B\cdot l = (\pi ^*B-\frac12 E)\cdot l = B\cdot \pi (l)-1$ by Lemma \ref{num}. 
But, we know that $-B_{|B_{\gamma}}\equiv \mathcal O_{B_{\gamma}}(1)$ from 
Theorem 9 of \cite{P} and Remark \ref{lepotier}. As $\pi (l)$ is a line on $B_{\gamma}=\mathbb P^2$, 
we get $\widetilde B\cdot l=-2$. This implies that $x=0$ and $y=1$, i.e., $l$ is numerically 
equivalent to $\beta$. This shows that $\overline{NE}(\widetilde B_{\gamma})$ is 
spanned by $\delta\equiv \sigma$ and $\beta\equiv l$. The same argument applies 
for the case in which $L$ is 2-torsion but $p$ is not. 

Now, let us assume both of $L$ and $p$ are 2-torsion. Then, $B_{\gamma}$ is a 3-fold 
that is a cone over a smooth quadric surface $Q$ in $\mathbb P^4$ (Theorem \ref{strB}) and 
the ideal of $B_{\gamma}\cap \Sigma _{red}$ is the square of the maximal ideal 
$\mathfrak m_b^2$ at the vertex of $B_{\gamma}$ (Theorem \ref{centerofblowingup}). 
Take a plane $\Pi$ spanned by a line on $Q$ and the vertex of $B_{\gamma}$. 
The strict transform $\widetilde \Pi$ in $\widetilde B_{\gamma}$ is again $\mathbb F_1$. 
Take a ruling $l$ of $\widetilde \Pi$. Then, we conclude that $l$ is numerically equivalent to 
$\beta$ by the same argument as above applied on $\widetilde \Pi$. The planes of the form of 
$\Pi$ sweep the whole $B_{\gamma}$. Therefore, $\overline{NE}(\widetilde B_{\gamma})$ is
spanned by $\delta$ and $\beta$, also in this case. 
\end{proof}

\begin{proof}[Proof of Main Theorem]
It remains to prove (ii). For any $\gamma$, $\widetilde B_{\gamma}$ has a $\mathbb P^1$-bundle structure 
whose fiber is numerically equivalent to $\beta$. This already means that 
$f_{|\widetilde B}:\widetilde B\to Z=f(\widetilde B)$ is $\mathbb P^1$-bundle. 
Theorems 1.3 and 1.4 of \cite{W} implies that $Z=f(\widetilde B)$ is smooth symplectic
and is a locally trivial family of $A_1$-singularities. As $Z$ obviously birationally dominates 
$\varphi (B)=(\hat A/\{\pm 1\})\times (A/\{\pm 1\})$, $Z$ is a symplectic resolution of 
$(\hat A/\{\pm 1\})\times (A/\{\pm 1\})$. The remaining assertion is a consequence of the 
following easy 

\begin{claim}
Let $A_1,A_2$ be abelian surfaces. Then, the product of Kummer surfaces
\[
g: \Kum (A_1)\times \Kum(A_2)\to (A_1/\{\pm 1\})\times (A_2/\{\pm 1\})
\]
is the only crepant resolution of $(A_1/\{\pm 1\})\times (A_2/\{\pm 1\})$.
\end{claim}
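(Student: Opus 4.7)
The plan is to show that any crepant resolution $h \colon Z \to V := (A_1/\{\pm 1\}) \times (A_2/\{\pm 1\})$ factors through $g$, yielding a birational morphism $\phi \colon Z \to W := \Kum(A_1) \times \Kum(A_2)$, and then to argue that $\phi$ is forced to be an isomorphism by a standard discrepancy and $\mathbb{Q}$-factoriality argument. This simultaneously gives existence and uniqueness: existence is obvious since $g$ is crepant, being the product of the crepant minimal resolutions $\Kum(A_i) \to A_i/\{\pm 1\}$.

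For the factorization, I compose $h$ with the two projections $p_i \colon V \to A_i/\{\pm 1\}$ to obtain morphisms $h_i := p_i \circ h \colon Z \to A_i/\{\pm 1\}$. Each $A_i/\{\pm 1\}$ has only Du Val singularities of type $A_1$ (sixteen of them, at the images of the $2$-torsion points), and $\Kum(A_i)$ is its minimal resolution. I would show that $h_i$ lifts uniquely to $\tilde h_i \colon Z \to \Kum(A_i)$; the obstruction to such a lift is the failure of $h_i^{-1}(\mathfrak m_q)\cdot \mathcal O_Z$ to be invertible at each singular point $q \in A_i/\{\pm 1\}$. This invertibility is checked by local analysis: the preimage $h^{-1}(\{q\}\times A_{3-i}/\{\pm 1\}) \subset Z$ lies in the exceptional locus of $h$, and by crepantness of $h$ must contain the divisorial $\mathbb{P}^1$-bundle resolving the Du Val singularity along this divisorial component of $\Sing V$. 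Combining the two lifts yields the desired $\phi := (\tilde h_1, \tilde h_2) \colon Z \to W$ with $g \circ \phi = h$.

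To see that $\phi$ is an isomorphism, observe that both $Z$ and $W$ are smooth projective $4$-folds with trivial canonical bundle (both are crepant over $V$, whose canonical bundle is trivial since $-1$ preserves the symplectic form on $A_1 \times A_2$). If $\phi$ admitted an exceptional prime divisor $E$, then smoothness of $W$ would force the discrepancy of $E$ along $\phi$ to be strictly positive, contradicting the crepant equality $K_Z = \phi^* K_W$. Hence $\phi$ is small; but $W$, being smooth and therefore $\mathbb{Q}$-factorial, can admit no proper birational morphism with empty exceptional divisor locus other than an isomorphism, by purity of the exceptional locus over a $\mathbb{Q}$-factorial base. The main obstacle I anticipate is the factorization step: rigorously promoting the universal property of the minimal resolution of a Du Val surface singularity from the classical surface-to-surface setting to the situation of a higher-dimensional smooth source, which is precisely where the crepantness of $h$ must enter in an essential way, since a non-crepant resolution could in principle collapse fibers in ways that prevent the lift from existing.
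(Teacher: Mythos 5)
Your overall architecture (factor an arbitrary crepant resolution $h\colon Z\to V$ through $g$, then kill $\phi\colon Z\to W$ by the discrepancy/purity argument) is sound in its second half, but the factorization step --- which you yourself flag as the main obstacle --- is a genuine gap, and it is exactly where the content of the claim sits. To lift $h_i=p_i\circ h$ to $\Kum(A_i)$ you need the ideal $h_i^{-1}(\mathfrak m_q)\cdot\mathcal O_Z$ to be invertible, i.e.\ locally principal, at \emph{every} point of $Z$ over $\{q\}\times (A_{3-i}/\{\pm 1\})$. Your justification only discusses the behaviour over the generic point of this codimension-two stratum (which, incidentally, is not ``divisorial''): crepancy does force the generic fibre there to be the single $(-2)$-curve, and over that locus invertibility can be extracted from the surface theory. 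But the decisive points are the $256$ points $(q,q')$ where both factors are singular. There the fibre of $h$ is a priori unknown, merely ``containing the $\mathbb P^1$-bundle divisor'' says nothing about local principality of the pulled-back ideal, and a hypothetical second crepant resolution --- obtained, say, by flopping a curve sitting over such a deep point --- would fail the lifting criterion precisely there. So your local analysis assumes away the very phenomenon the uniqueness statement has to exclude, and nothing in the proposal rules it out.

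For comparison, the paper's proof never factors $Z$ through $g$: it uses that any two crepant resolutions are isomorphic in codimension one, takes the strict transform $H$ of an ample divisor on the competing resolution, and shows that a $(K+\varepsilon H)$-extremal contraction of $W=\Kum(A_1)\times\Kum(A_2)$ would have to be small and contract a rational curve inside some $E_{1,j_1}\cap E_{2,j_2}\cong\mathbb P^1\times\mathbb P^1$; extremality forces the contracted class to be a ruling, and the rulings sweep out the divisors $E_{1,j_1}$ resp.\ $E_{2,j_2}$, contradicting smallness. In other words, the paper attacks the deep fibres head-on by showing no flopping curve exists there. If you want to rescue your approach, you would need an argument of comparable strength at the points $(q,q')$ --- e.g.\ a classification of crepant resolutions of the local model $(\mathbb C^2/\pm)\times(\mathbb C^2/\pm)$, or the same sweeping-divisor argument --- at which point your factorization step essentially reproduces the paper's proof rather than bypassing it.
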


\begin{proof}[Proof of the claim]
Let $g_i:\Kum (A_i)\to A_i/\{\pm 1\}$ be the minimal resolution 
and $\bar E_{i,1},\cdots ,\bar E_{i,16}$ the exceptional curves. 
Then 
\[
E_{1,j}=\bar E_{1,j}\times \Kum (A_2)\; ,\; E_{2,j}=\Kum (A_1)\times \bar E_{2,j}
\]
are the exceptional divisors of $g$. If $Z'\to (A_1/\{\pm 1\})\times (A_2/\{\pm 1\})$ 
is another crepant resolution, $Z'$ and $\Kum (A_1)\times \Kum(A_2)$ are
isomorphic in codimension one. Let 
$\xymatrix{\phi: \Kum (A_1)\times \Kum(A_2)\ar@{..>}[r]&Z'}$ be the birational map. 
Let $H'$ be an ample divisor on $Z'$ and $H=\phi _*^{-1}H'$ the strict transform 
on $\Kum (A_1)\times \Kum(A_2)$. 
Every $(K_{\Kum (A_1)\times \Kum(A_2)}+\varepsilon H)$-extremal contraction $h$ must be 
a small contraction and contracts a rational curve contained 
in some $E_{1,j_1}\cap E_{2,j_2}\cong \mathbb P^1\times \mathbb P^1$. 
But, then, $h$ must contract at least one of $E_{1,j_1}$ and $E_{2,j_2}$, which is a contraction. 
Therefore, $\phi$ must be an isomorphism. 
\noqed
\end{proof}
This finishes the proof of Main Theorem. 
\end{proof}


\begin{bibdiv}
\begin{biblist}

\bib{S}{article}{
  title = {{\sc Singular} --- {A} computer algebra system for polynomial computations},
  year = {2009},
  author = {Greuel, G.-M.},
  author = {Pfister, G.},
  author = {Sch{\"o}nemann, H.},
  note = {http://www.singular.uni-kl.de},
}

\bib{H-L}{book}{
   author={Huybrechts, Daniel},
   author={Lehn, Manfred},
   title={The geometry of moduli spaces of sheaves},
   series={Aspects of Mathematics, E31},
   publisher={Friedr. Vieweg \& Sohn},
   place={Braunschweig},
   date={1997},
   pages={xiv+269},
   isbn={3-528-06907-4},
}

\bib{K-M}{book}{
   author={Koll{\'a}r, J{\'a}nos},
   author={Mori, Shigefumi},
   title={Birational geometry of algebraic varieties},
   series={Cambridge Tracts in Mathematics},
   volume={134},
   note={With the collaboration of C. H. Clemens and A. Corti;
   Translated from the 1998 Japanese original},
   publisher={Cambridge University Press},
   place={Cambridge},
   date={1998},
   pages={viii+254},
   isbn={0-521-63277-3},
}

\bib{Kra}{article}{
   author={Kraft, Hanspeter},
   title={Klassische Invariantentheorie. Eine Einf\"uhrung},
   language={German},
   conference={
      title={Algebraische Transformationsgruppen und Invariantentheorie},
   },
   book={
      series={DMV Sem.},
      volume={13},
      publisher={Birkh\"auser},
      place={Basel},
   },
   date={1989},
   pages={41--62},
}

\bib{L-S}{article}{
   author={Lehn, Manfred},
   author={Sorger, Christoph},
   title={La singularit\'e de O'Grady},
   language={French, with English and French summaries},
   journal={J. Algebraic Geom.},
   volume={15},
   date={2006},
   number={4},
   pages={753--770},
   issn={1056-3911},
}

\bib{Muk}{article}{
   author={Mukai, Shigeru},
   title={Duality between $D(X)$ and $D(\hat X)$ with its application to
   Picard sheaves},
   journal={Nagoya Math. J.},
   volume={81},
   date={1981},
   pages={153--175},
   issn={0027-7630},
}

\bib{Muk2}{article}{
   author={Mukai, Shigeru},
   title={Fourier functor and its application to the moduli of bundles on an
   abelian variety},
   conference={
      title={Algebraic geometry, Sendai, 1985},
   },
   book={
      series={Adv. Stud. Pure Math.},
      volume={10},
      publisher={North-Holland},
      place={Amsterdam},
   },
   date={1987},
   pages={515--550},
}

\bib{Mum}{book}{
   author={Mumford, David},
   title={Abelian varieties},
   series={Tata Institute of Fundamental Research Studies in Mathematics,
   No. 5 },
   publisher={Published for the Tata Institute of Fundamental Research,
   Bombay},
   date={1970},
   pages={viii+242},
}

\bib{N}{article}{
   author={Nagai, Yasunari}, 
   title={Non-locally-free locus of O'Grady's ten dimensional example},
   journal={preprint},
   date={2010},
}

\bib{OG10}{article}{
   author={O'Grady, Kieran G.},
   title={Desingularized moduli spaces of sheaves on a $K3$},
   journal={J. Reine Angew. Math.},
   volume={512},
   date={1999},
   pages={49--117},
   issn={0075-4102},
   doi={10.1515/crll.1999.056},
}

\bib{OG6}{article}{
   author={O'Grady, Kieran G.},
   title={A new six-dimensional irreducible symplectic variety},
   journal={J. Algebraic Geom.},
   volume={12},
   date={2003},
   number={3},
   pages={435--505},
   issn={1056-3911},
}

\bib{P}{article}{
   author={Perego, Arvid},
   title={The 2-factoriality of the O'Grady moduli spaces},
   journal={Math. Ann.},
   volume={346},
   date={2010},
   number={2},
   pages={367--391},
   issn={0025-5831},
   doi={10.1007/s00208-009-0402-0},
}

\bib{PV}{article}{
   author={Popov, V. L.},
   author={Vinberg, E. B.},
   title={Invariant theory},
   book={
	title={Algebraic geometry. IV},
	series={Encyclopaedia of Mathematical Sciences},
   	volume={55},
   	editor={Shafarevich, I. R.},
	publisher={Springer-Verlag},
   	place={Berlin},
      	pages={123--278},
   },
   language={English translation from Russian edition (1989)},
   date={1994},
}

\bib{R}{article}{
   author={Rapagnetta, Antonio},
   title={Topological invariants of O'Grady's six dimensional irreducible
   symplectic variety},
   journal={Math. Z.},
   volume={256},
   date={2007},
   number={1},
   pages={1--34},
   issn={0025-5874},
   doi={10.1007/s00209-006-0022-2},
}

\bib{W}{article}{
   author={Wierzba, Jan},
   title={Contractions of symplectic varieties},
   journal={J. Algebraic Geom.},
   volume={12},
   date={2003},
   number={3},
   pages={507--534},
   issn={1056-3911},
}

\bib{Y}{article}{
   author={Yoshioka, K{\=o}ta},
   title={Moduli spaces of stable sheaves on abelian surfaces},
   journal={Math. Ann.},
   volume={321},
   date={2001},
   number={4},
   pages={817--884},
   issn={0025-5831},
   doi={10.1007/s002080100255},
}

\end{biblist}
\end{bibdiv} 

\end{document}